\newcommand{\radius}{r}
\newcommand{\RR}{\mathbb{R}}
\newcommand{\ra}{\rightarrow}
\newcommand{\eg}{{\it e.g.}}
\newcommand{\ie}{{\it i.e.}}
\newtheorem{lem}{Lemma}
\newtheorem{thm}{Theorem}
\newtheorem{defi}{Definition}
\newtheorem{prop}{Proposition}
\newtheorem{obs}{Observation}
\begin{document}

\title{On Linear Optimization over Wasserstein Balls}

\author[1]{Man-Chung Yue}
\author[2]{Daniel Kuhn}
\author[3]{Wolfram Wiesemann}

\affil[1]{\small \textit{Department of Applied Mathematics, The Hong Kong Polytechnic University,} \texttt{manchung.yue@polyu.edu.hk}}
\affil[2]{\small \textit{College of Management of Technology, \'Ecole polytechnique f\'ed\'erale de Lausanne,} \texttt{daniel.kuhn@epfl.ch}}
\affil[3]{\small \textit{Imperial College Business School, Imperial College London,} \texttt{ww@imperial.ac.uk}}

\date{\today}

\maketitle

\begin{abstract}
Wasserstein balls, which contain all probability measures within a pre-specified Wasserstein distance to a reference measure, have recently enjoyed wide popularity in the distributionally robust optimization and machine learning communities to formulate and solve data-driven optimization problems with rigorous statistical guarantees. In this technical note we prove that the Wasserstein ball is weakly compact under mild conditions, and we offer necessary and sufficient conditions for the existence of optimal solutions. We also characterize the sparsity of solutions if the Wasserstein ball is centred at a discrete reference measure. In comparison with the existing literature, which has proved similar results under different conditions, our proofs are self-contained and shorter, yet mathematically rigorous, and our necessary and sufficient conditions for the existence of optimal solutions are easily verifiable in practice.
\end{abstract}

\section{Introduction}\label{sec:intro}

Let $(X, d)$ be a Polish (\ie, a complete and separable) metric space. We assume that $(X, d)$ is proper, that is, for any $R>0$ and $x_0\in X$, the closed ball $B_R(x_0) := \{x\in X: d(x,x_0) \le R\}$ is compact. Examples of proper metric spaces include finite-dimensional Banach spaces and complete Riemannian manifolds. We denote by $\mathcal{P} (X)$ the set of all Borel probability measures that are supported on $X$.

For $p \in [1, \infty)$, the $p$-th Wasserstein distance between $\mu_1, \mu_2 \in \mathcal{P} (X)$ is defined as 
\begin{equation}\label{def:W_P}
	W_p (\mu_1 , \mu_2) = \inf_{\gamma \in \Gamma(\mu_1, \mu_2)}  \left(  \int_{X\times X} d^p (x_1,x_2)\,\mathrm{d}\gamma(x_1, x_2) \right)^{\frac{1}{p}} ,
\end{equation}
where $\Gamma (\mu_1, \mu_2)$ is the set of all couplings of $\mu_1$ and $\mu_2$, that is, the set of all probability measures supported on $X \times X$ with marginals $\mu_1$ and $\mu_2$ (see, \eg, \cite[Definition 2.1]{clement2008wasserstein} or \cite[Definition 6.1]{villani2008optimal}). Intuitively speaking, $W_p (\mu_1 , \mu_2)$ measures the minimum transportation cost required to transform the mass of $\mu_1$ into the mass of $\mu_2$, where the transportation cost is measured according to the ground metric $d$.

The Wasserstein ball of radius $\radius > 0$ centred at the reference measure $\nu \in \mathcal{P} (X)$ is
\begin{equation}\label{def:the_ball}
	\mathcal{B}_\radius(\nu) = \left\lbrace \mu \in \mathcal{P}(X) : W_p(\mu, \nu) \le \radius \right\rbrace.
\end{equation}
In this paper, we study the optimization problem
\begin{equation}\label{opt:dist}
	\begin{array}{l@{\quad}l@{\quad}l}
		\displaystyle \mathop{\text{maximize}}_{\mu} & \displaystyle \int_X f(x) \, \mathrm{d}\mu (x) \\[4mm]
		\displaystyle \text{subject to} & \displaystyle \mu \in \mathcal{B}_{\radius} (\nu),
	\end{array}
\end{equation}
where $f : X \ra \RR$ is assumed to be upper semi-continuous, and where for each $\mu\in \mathcal{B}_\radius (\nu)$ at least one of the integrals $\int_X [f (x)]_+ \, \mathrm{d} \mu (x)$ and $\int_X [-f (x)]_+ \, \mathrm{d} \mu (x)$ is finite. Optimization problems of the form~\eqref{opt:dist} arise in distributionally robust optimization, where ambiguity averse decisions are sought that perform well under misestimations of the unknown true data generating distribution \cite{BM19:quantifying, gao2016distributionally, MEK18:watergate, ref:nguyen2019calculating, PW07:ambiguity, ZG18:watergate}. Problem~\eqref{opt:dist} also emerges when one regularizes machine learning problems such as classification problems \cite{NIPS2015_5679}, clustering problems \cite{10.5555/3305381.3305536} or generative adversarial networks \cite{wgan2017} against overfitting to the training samples.

The remainder of this technical note proceeds as follows. We first prove in Section~\ref{sec:weakly-compact} that the Wasserstein ball $\mathcal{B}_\radius(\nu)$ is weakly compact under mild conditions. We then leverage this finding in Section~\ref{sec:finiteness} to derive a necessary and sufficient condition for the optimal value of the optimization problem~\eqref{opt:dist} to be finite. Section~\ref{sec:existence_of_solutions} shows that essentially the same condition is also sufficient for the optimal value of~\eqref{opt:dist} to be attained. Section~\ref{sec:discrete} is devoted to sparse solutions to~\eqref{opt:dist} that place positive probability on finitely many atoms. To keep this note self-contained, we review results from measure theory and from infinite-dimensional linear programming in the Appendices~\ref{sec:m-theory} and~\ref{sec:sparsity}, respectively.

\section{Weak Compactness of the Wasserstein Ball}\label{sec:weakly-compact}

We say that a probability measure $\mu$ has a finite $p$-th moment if $\int_X d^p(x,x_0) \, \mathrm{d}\mu(x) < \infty$ for some $x_0 \in X$. The triangle inequality for $d$ implies that the integral is finite for some $x_0 \in X$ if and only if it is finite for all $x_0 \in X$, and thus the reference point $x_0$ does not matter. In contrast to the literature, which commonly defines the Wasserstein distance for measures with finite $p$-th moment only, our definition of the Wasserstein distance also applies to measures that do not possess a finite $p$-th moment. We now show, however, that the Wasserstein ball $\mathcal{B}_\radius (\nu)$ only contains measures with a finite $p$-th moment whenever the reference distribution $\nu$ has a finite $p$-th moment.

\begin{lem}\label{lem:main}
The Wasserstein ball $\mathcal{B}_\radius (\nu)$ has a uniformly bounded $p$-th moment, that is, there exists $C > 0$ such that $\int_X d^p(x,x_0) \, \mathrm{d}\mu(x) \leq C$ for all $\mu \in \mathcal{B}_\radius (\nu)$, whenever $\nu$ has a finite $p$-th moment.
\end{lem}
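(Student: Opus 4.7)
The plan is to derive the bound directly from the definition of the Wasserstein distance via Minkowski's inequality. Fix an arbitrary reference point $x_0 \in X$ and let $\mu \in \mathcal{B}_\radius(\nu)$. Since $W_p(\mu,\nu) \le \radius$, for every $\varepsilon > 0$ the definition of the infimum in~\eqref{def:W_P} provides a coupling $\gamma \in \Gamma(\mu,\nu)$ satisfying $\bigl( \int_{X\times X} d^p(x_1,x_2) \, \mathrm{d}\gamma(x_1,x_2) \bigr)^{1/p} \le \radius + \varepsilon$.

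Since $\mu$ is the first marginal of $\gamma$, I would lift the $p$-th moment of $\mu$ to the product space,
\begin{equation*}
\left( \int_X d^p(x_1, x_0) \, \mathrm{d}\mu(x_1) \right)^{1/p} = \left( \int_{X\times X} d^p(x_1, x_0) \, \mathrm{d}\gamma(x_1, x_2) \right)^{1/p},
\end{equation*}
and then combine the metric triangle inequality $d(x_1, x_0) \le d(x_1, x_2) + d(x_2, x_0)$ with Minkowski's inequality in $L^p(\gamma)$ to obtain
\begin{equation*}
\left( \int_X d^p(x_1, x_0) \, \mathrm{d}\mu(x_1) \right)^{1/p} \le (\radius + \varepsilon) + \left( \int_X d^p(x_2, x_0) \, \mathrm{d}\nu(x_2) \right)^{1/p}.
\end{equation*}
The last term is finite by the hypothesis on $\nu$. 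Letting $\varepsilon \downarrow 0$ and raising to the $p$-th power then produces a uniform constant $C$ depending only on $\radius$, $p$, $\nu$, and $x_0$, which is valid for every $\mu \in \mathcal{B}_\radius(\nu)$.

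There is essentially no obstacle here: the only subtle point is the availability of a near-optimal coupling, which comes for free from the infimum definition via $\varepsilon$-approximation (and could alternatively be replaced by an exact optimal plan using the well-known existence result for optimal transport on Polish spaces with a lower semi-continuous cost). Everything else reduces to two classical inequalities applied on the product space $X \times X$.
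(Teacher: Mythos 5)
Your proof is correct. It is the same triangle-inequality idea as the paper's, but packaged differently: the paper writes $\left(\int_X d^p(x,x_0)\,\mathrm{d}\mu(x)\right)^{1/p} = W_p(\mu,\delta_{x_0}) \le W_p(\nu,\delta_{x_0}) + W_p(\mu,\nu)$ and invokes the triangle inequality for $W_p$ on $\mathcal{P}(X)$ as a cited black box, whereas you never mention Dirac measures and instead pick an $\varepsilon$-optimal coupling $\gamma \in \Gamma(\mu,\nu)$, lift both moments to the product space via the marginal identities, and apply the metric triangle inequality together with Minkowski in $L^p(\gamma)$. Your route is more self-contained: it reproves exactly the instance of the $W_p$ triangle inequality that is needed, and since one ``endpoint'' is effectively a point mass it avoids the gluing argument behind the general triangle inequality; the $\varepsilon$-approximation (or an exact optimal plan) is indeed all that is required, and Minkowski also yields finiteness of the left-hand side without assuming it a priori. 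The paper's version buys brevity by citation; yours buys independence from the cited result at the cost of a few extra lines. One cosmetic remark: with $C_0$ a bound on $\int_X d^p(x,x_0)\,\mathrm{d}\nu(x)$, the final constant should be of the form $C = \bigl(C_0^{1/p} + \radius\bigr)^p$ after raising to the $p$-th power, which your formulation handles correctly since you keep the bound in $L^p$-norm form until the last step.
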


\begin{proof}[Proof of Lemma~\ref{lem:main}]
Since the reference measure $\nu$ has a finite $p$-th moment, there is $x_0 \in X$ and $C_0 \in \mathbb{R}$ such that $\int_X d^p(x,x_0) \, \mathrm{d}\nu(x) \leq C_0$. We claim that $\int_X d^p(x,x_0) \, \mathrm{d}\mu(x) \leq C_0 + r$ for all measures $\mu \in \mathcal{B}_\radius (\nu)$ in the Wasserstein ball. To see this, note that
\begin{equation*}
	\begin{split}
		&\,\left( \int_X d^p(x,x_0)\, \mathrm{d}\mu (x) \right)^{\frac{1}{p}} 
		= W_p (\mu, \delta_{x_0}) 
		\le W_p (\nu, \delta_{x_0}) + W_p (\mu, \nu) \\
		=&\, \left( \int_X d^p (x, x_0)\, \mathrm{d}\nu (x) \right)^{\frac{1}{p}}+ W_p (\mu, \nu) 
		\le C_0 + \radius,
	\end{split}
\end{equation*} 
where $\delta_{x_0}$ is the Dirac measure at $x_0$, and the first inequality follows from the triangle inequality of the Wasserstein distance on $\mathcal{P} (X)$~\cite[Corollary 5.7]{clement2008wasserstein}.
\end{proof}

We are now ready to prove the weak compactness of $\mathcal{B}_\radius (\nu)$.

\begin{thm}[Weak Compactness of Wasserstein Ball]\label{thm:compact}
The Wasserstein ball $\mathcal{B}_\radius(\nu)$ is weakly compact whenever the reference measure $\nu$ has a finite $p$-th moment.
\end{thm}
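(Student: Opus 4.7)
The plan is to prove weak compactness via Prokhorov's theorem, which on a Polish space characterizes weakly relatively compact sets of probability measures as the tight ones. Combining relative compactness with weak closedness of $\mathcal{B}_\radius(\nu)$ will then yield the desired weak compactness.

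First, I would establish tightness of $\mathcal{B}_\radius(\nu)$ using Lemma~\ref{lem:main}. That lemma provides a constant $C > 0$ and a point $x_0 \in X$ such that $\int_X d^p(x,x_0)\,\mathrm{d}\mu(x) \le C$ for every $\mu \in \mathcal{B}_\radius(\nu)$. By Markov's inequality, this yields the uniform tail bound
\begin{equation*}
    \mu\bigl(\{x \in X : d(x,x_0) > R\}\bigr) \;\le\; \frac{C}{R^p}
    \quad \text{for all } \mu \in \mathcal{B}_\radius(\nu) \text{ and all } R > 0.
\end{equation*}
Crucially, the properness of $(X,d)$ guarantees that the closed ball $B_R(x_0)$ is compact. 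Given any $\varepsilon > 0$, choosing $R$ sufficiently large so that $C/R^p < \varepsilon$ produces a compact set $K = B_R(x_0)$ with $\mu(X\setminus K) < \varepsilon$ uniformly over $\mathcal{B}_\radius(\nu)$, which is exactly the tightness condition.

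Next, I would show that $\mathcal{B}_\radius(\nu)$ is weakly closed. Let $(\mu_n) \subset \mathcal{B}_\radius(\nu)$ converge weakly to some $\mu \in \mathcal{P}(X)$. The key ingredient is the lower semi-continuity of the Wasserstein distance with respect to weak convergence, a standard result available in \cite[Proposition 7.1.3]{clement2008wasserstein} or \cite[Remark 6.12]{villani2008optimal}: applied to the constant sequence $\nu$ on one side and $\mu_n \to \mu$ on the other, it gives $W_p(\mu,\nu) \le \liminf_{n\to\infty} W_p(\mu_n,\nu) \le \radius$, so $\mu \in \mathcal{B}_\radius(\nu)$. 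Combining tightness with weak closedness via Prokhorov's theorem then completes the proof.

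The main conceptual point, and the only place one must be careful, is the lower semi-continuity step: the authors work with a slightly nonstandard definition of $W_p$ that does not a priori restrict to measures with finite $p$-th moment, so one should verify that the cited lower semi-continuity result applies. This is not a real obstacle, because Lemma~\ref{lem:main} ensures every $\mu_n$ in the sequence has finite $p$-th moment, and any weak limit of a tight sequence with uniformly bounded $p$-th moments inherits finite $p$-th moment by Fatou's lemma, so the classical statement of lower semi-continuity is directly applicable.
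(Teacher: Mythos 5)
Your proposal is correct and follows essentially the same route as the paper's proof: tightness via Lemma~\ref{lem:main}, Markov's inequality and properness of $(X,d)$, weak closedness via lower semi-continuity of $\mu \mapsto W_p(\mu,\nu)$ under weak convergence, and Prokhorov's theorem to conclude. The only (harmless) differences are that you apply Markov directly to $d^p(\cdot,x_0)$ rather than the paper's combination of Markov and Jensen, and your closing remark about measures without finite $p$-th moment is a nice extra precaution that the paper handles implicitly through its citation of the general lower semi-continuity result.
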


\begin{proof}[Proof of Theorem~\ref{thm:compact}]
The map $\mu \mapsto W_p (\mu, \nu)$ is lower semi-continuous with respect to the weak convergence \cite[Corollary 5.3]{clement2008wasserstein}. Since the Wasserstein ball $\mathcal{B}_\radius (\nu)$ is a lower level  set of this map, $\mathcal{B}_\radius (\nu)$ is weakly closed, that is, $\mathcal{B}_\radius (\nu)$ coincides with its weak closure. To see that $\mathcal{B}_\radius (\nu)$ is weakly compact, we show that $\mathcal{B}_\radius (\nu)$ is tight. We can then employ Prokhorov's Theorem (\emph{cf.}~Theorem~\ref{thm:Prokhorov} from Appendix~\ref{sec:m-theory} and the subsequent remark) to conclude that the closure of the Wasserstein ball $\mathcal{B}_\radius (\nu)$, which by our previous argument coincides with $\mathcal{B}_\radius (\nu)$, is weakly compact.

By Lemma~\ref{lem:main}, there is $C > 0$ such that $\left[ \int_X d^p(x,x_0) \, \mathrm{d}\mu(x) \right]^{\frac{1}{p}} \leq C$ for all $\mu \in \mathcal{B}_\radius (\nu)$. To see that the Wasserstein ball $\mathcal{B}_\radius (\nu)$ is tight, we show that for every $\epsilon > 0$, we have $\mu \left( X \setminus B_{C/\epsilon} (x_0) \right) \leq \epsilon$ for all $\mu \in \mathcal{B}_\radius (\nu)$ (\emph{cf.}~Definition~\ref{def:tight} from Appendix~\ref{sec:m-theory}). Indeed, we have that
\begin{align*}
\mu \left( X \setminus B_{C/\epsilon} (x_0) \right)
\;\; &= \;\;
\mu(\lbrace x \in X \, : \, d(x,x_0) > C/\epsilon \rbrace)
\;\; \le \;\;
\frac{\int_X d(x,x_0) \, \mathrm{d}\mu(x)}{C/\epsilon} \\
\;\; &\le \;\;
\frac{\epsilon \cdot \left[ \int_X d^p (x,x_0) \, \mathrm{d}\mu(x) \right]^{\frac{1}{p}}}{ C }
\mspace{45mu} \le \;\;
\epsilon,
\end{align*}
where the first inequality is due to Markov's inequality, the second inequality is due to Jensen's inequality, and the third inequality holds by our definition of $C$. Note that the set $B_{C/\epsilon} (x_0)$ is compact because of the properness of $(X,d)$.
\end{proof}

Statements similar to Theorem~\ref{thm:compact} have been shown in \cite[Lemma~3.34]{DelloSchiavo2015} and \cite[Proposition~3]{pichler2017quantitative}. We conclude this section by showing that our assumption of the Polish space $(X, d)$ being proper is indeed necessary for the statement of Theorem~\ref{thm:compact}.\footnote{We are grateful to Lorenzo Dello Schiavo, who communicated this result to us.}
\begin{obs}
\label{prop:proper}
	If the Wasserstein ball $\mathcal{B}_\radius(\nu)$ is weakly compact for every $\radius > 0$ and $\nu \in \mathcal{P}(X)$, then $(X,d)$ is proper.
\end{obs}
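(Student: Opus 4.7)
I would prove the contrapositive: if $(X,d)$ is not proper, then there exist $r > 0$ and $\nu \in \mathcal{P}(X)$ for which $\mathcal{B}_r(\nu)$ fails to be weakly compact. Concretely, I would exhibit a sequence in $\mathcal{B}_r(\nu)$ that cannot be tight, and invoke Prokhorov's theorem (Theorem~\ref{thm:Prokhorov}) in the direction stating that weakly compact collections of probability measures on a Polish space are tight.

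The plan is as follows. If $(X,d)$ is not proper, pick $x_0 \in X$ and $R > 0$ such that $B_R(x_0)$ is not compact. Since $(X,d)$ is Polish, it is complete, and $B_R(x_0)$ is closed, hence itself a complete metric space. A complete metric space is compact if and only if it is totally bounded, so the failure of compactness of $B_R(x_0)$ gives some $\delta > 0$ such that $B_R(x_0)$ cannot be covered by finitely many open balls of radius $\delta/2$. By a standard greedy argument, I can then extract a sequence $\{x_n\}_{n\in\mathbb{N}} \subset B_R(x_0)$ with $d(x_n, x_m) \geq \delta$ for all $n \neq m$.

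Now take $\nu := \delta_{x_0}$ and set $r := R$. Since $\nu$ is supported at a single point, it trivially has finite $p$-th moment. For each $n$, the Dirac measure $\mu_n := \delta_{x_n}$ satisfies $W_p(\mu_n, \nu) = d(x_n, x_0) \leq R = r$, so $\mu_n \in \mathcal{B}_r(\nu)$ for every $n$. I will show that $\mathcal{B}_r(\nu)$ is not tight, which by Prokhorov's theorem precludes weak compactness. Indeed, suppose for contradiction that $\mathcal{B}_r(\nu)$ were tight. Applied with $\epsilon = 1/2$, tightness would produce a compact set $K \subset X$ with $\mu_n(K) \geq 1/2$ for all $n$. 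Because each $\mu_n$ is a Dirac measure, $\mu_n(K) \in \{0,1\}$, so $\mu_n(K) = 1$, i.e.\ $x_n \in K$ for every $n$. But then $\{x_n\}$ is a $\delta$-separated sequence in a compact metric space, which is impossible since any sequence in a compact metric space admits a Cauchy subsequence. This contradiction finishes the proof.

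The main obstacle is the opening step, namely passing from the non-compactness of $B_R(x_0)$ to a uniformly separated sequence inside it; once that sequence is in hand, the Dirac-measure construction and Prokhorov's theorem dispatch the rest cleanly. The only mild technical point is ensuring that completeness (inherited from the Polish assumption) is correctly invoked to equate non-compactness with the failure of total boundedness for the closed ball $B_R(x_0)$.
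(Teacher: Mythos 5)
Your proof is correct, but it takes a genuinely different route from the paper's. The paper argues directly: it embeds the metric ball $B_R(x_0)$ into $\mathcal{P}(X)$ via the Dirac map $x \mapsto \delta_x$, observes that the image is a weakly closed subset of the (by hypothesis weakly compact) Wasserstein ball $\mathcal{B}_R(\delta_{x_0})$, hence weakly compact, and then invokes Example~8.6.6 of Bogachev to pull compactness back to $B_R(x_0)$. You instead prove the contrapositive: completeness of the closed ball plus failure of total boundedness yields a uniformly separated sequence $\{x_n\}$, the Diracs $\delta_{x_n}$ all lie in $\mathcal{B}_R(\delta_{x_0})$ because $W_p(\delta_{x_n},\delta_{x_0}) = d(x_n,x_0) \le R$, and the separation rules out tightness, so the ``compact $\Rightarrow$ tight'' direction of Prokhorov's theorem (Theorem~\ref{thm:Prokhorov}, which indeed requires the Polish assumption and is stated in the appendix in exactly the two-way form you need) forbids weak compactness. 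Both arguments hinge on the same observation that Dirac measures at points of the metric ball populate the Wasserstein ball; yours trades the external citation for an elementary, self-contained tightness argument at the cost of a slightly longer write-up, whereas the paper's direct version is shorter but leans on the cited closed-embedding property of the Dirac map. Two harmless details you may wish to tidy: a set that cannot be covered by finitely many balls of radius $\delta/2$ yields, via the greedy construction, a sequence with pairwise distances at least $\delta/2$ rather than $\delta$ (any positive separation constant works, so nothing breaks); and when you pass from weak compactness of $\mathcal{B}_R(\delta_{x_0})$ to tightness you implicitly use that, in the metrizable space $\mathcal{P}(X)$, a weakly compact set is weakly closed and sequentially compact, which is exactly the remark following Theorem~\ref{thm:Prokhorov}.
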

\begin{proof}[Proof of Observation~\ref{prop:proper}]
	We show that the closed ball $B_R(x_0) = \{x\in X: d(x,x_0) \le R\}$ is compact for any $R>0$ and $x_0\in X$. To this end, fix any $R>0$ and $x_0\in X$, and let $F: X \to \mathcal{P} (X)$ be defined as $F(x) = \delta_x$, where $\delta_x$ is again the Dirac measure at $x$. One readily  verifies that $F$ is weakly continuous. Together with the closedness of $B_R(x_0)$, this implies that the image $F( B_R(x_0) )$ is weakly closed. Since $F( B_R(x_0) ) \subseteq \mathcal{B}_R(\delta_{x_0})$ and $\mathcal{B}_R(\delta_{x_0})$ is weakly compact by assumption, we conclude that $F( B_R(x_0) )$ is weakly compact as well. Example~8.6.6 in \cite{bogachev2007measure2} then implies that $B_\radius (x_0)$ is compact.
\end{proof}

\section{Finiteness of the Optimal Value of Problem~\eqref{opt:dist}}\label{sec:finiteness}

We provide a necessary and sufficient condition for the finiteness of the optimal value of the optimization problem~\eqref{opt:dist}.

\begin{thm}\label{thm:finiteness}
	Assume that $\nu$ has a finite $p$-th moment. Then the optimal value of problem~\eqref{opt:dist} is finite if and only if there exist $x_0\in X$ and $c>0$ such that $f(x) \le c[1+ d^p (x,x_0)]$ for all $x\in X$.
\end{thm}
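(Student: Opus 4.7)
The direction ``growth condition implies finite optimal value'' follows from Lemma~\ref{lem:main} almost for free: if $f(x) \le c[1 + d^p(x, x_0)]$ for all $x \in X$, then any $\mu \in \mathcal{B}_\radius(\nu)$ satisfies $\int_X f(x) \, \mathrm{d}\mu(x) \le c + c \int_X d^p(x, x_0) \, \mathrm{d}\mu(x) \le c(1 + C)$, where $C$ denotes the uniform $p$-th moment bound supplied by Lemma~\ref{lem:main}; hence the optimal value is at most $c(1 + C) < \infty$.

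The converse direction I would prove by contraposition: assuming the growth bound fails, I would exhibit a sequence $\mu_n \in \mathcal{B}_\radius(\nu)$ with $\int_X f \, \mathrm{d}\mu_n \to +\infty$. Fix any $x_0 \in X$ and set $M = \int_X d^p(x, x_0) \, \mathrm{d}\nu(x)$, which is finite by hypothesis. The failure of the growth bound at this particular $x_0$ produces points $x_n \in X$ satisfying $f(x_n) \ge n [1 + d^p(x_n, x_0)]$. The candidate measures are the mixtures $\mu_n = (1 - \lambda_n) \nu + \lambda_n \delta_{x_n}$, which I would couple to $\nu$ via $\gamma_n = (1 - \lambda_n) \gamma_{\mathrm{id}} + \lambda_n (\nu \otimes \delta_{x_n})$, where $\gamma_{\mathrm{id}}$ is the identity coupling of $\nu$ with itself. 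A quick check shows that $\gamma_n$ has marginals $\nu$ and $\mu_n$ and transport cost $\lambda_n \int_X d^p(y, x_n) \, \mathrm{d}\nu(y) \le \lambda_n \cdot 2^{p-1} [M + d^p(x_n, x_0)]$ by the elementary inequality $(a + b)^p \le 2^{p-1}(a^p + b^p)$. Selecting $\lambda_n = \min\{1, \radius^p / (2^{p-1}[M + d^p(x_n, x_0)])\}$ thus guarantees $\mu_n \in \mathcal{B}_\radius(\nu)$, while the spike contribution satisfies $\lambda_n f(x_n) \ge \radius^p n [1 + d^p(x_n, x_0)] / (2^{p-1} [M + d^p(x_n, x_0)]) \ge \radius^p n / (2^{p-1} \max\{1, M\})$ after invoking $[1 + t]/[M + t] \ge 1/\max\{1, M\}$ for $t \ge 0$, and therefore diverges to $+\infty$.

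The only remaining task is to control the base-measure term $(1 - \lambda_n) \int_X f \, \mathrm{d}\nu$. If this integral equals $+\infty$ then $\mu = \nu$ already shows the optimal value is $+\infty$; if it is finite then the diverging spike term dominates and $\int_X f \, \mathrm{d}\mu_n \to +\infty$. The main technical obstacle is the residual case $\int_X f \, \mathrm{d}\nu = -\infty$ (arising when $\int_X [-f]_+ \, \mathrm{d}\nu = \infty$ while $\int_X [f]_+ \, \mathrm{d}\nu < \infty$), where this additive term would overwhelm the spike. I would circumvent it by first replacing $\nu$ with a surrogate $\nu'$ obtained by transporting the $\nu$-mass on the closed set $\{f \le -L\}$ (closed because $f$ is upper semi-continuous) onto the point $x_0$. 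Since $\nu(\{f \le -L\}) \to 0$ and $\int_{\{f \le -L\}} d^p(y, x_0) \, \mathrm{d}\nu(y) \to 0$ as $L \to \infty$ by dominated convergence, a sufficiently large choice of $L$ yields $W_p(\nu', \nu) \le \radius/2$ and $\int_X f \, \mathrm{d}\nu' \in \RR$. Repeating the spike construction with $\nu'$ in place of $\nu$ and transport budget $\radius/2$, and invoking the triangle inequality $W_p(\mu_n, \nu) \le W_p(\mu_n, \nu') + W_p(\nu', \nu)$, then delivers the required sequence.
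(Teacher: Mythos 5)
Your proposal is correct in substance but follows a genuinely different route from the paper's for the hard direction. The paper first replaces $\nu$ by a finitely supported measure $\hat{\nu}$ with $W_p(\hat{\nu},\nu)\le r/2$ (via the density of discrete measures, \cite[Theorem 6.18]{villani2008optimal}); this simultaneously shows that the optimal value exceeds $-\infty$ and supplies an atom whose mass can be partially shifted to the points violating the growth condition. You instead spike the reference measure itself, $\mu_n=(1-\lambda_n)\nu+\lambda_n\delta_{x_n}$, certify feasibility with an explicit product coupling, and handle the delicate case $\int_X f\,\mathrm{d}\nu=-\infty$ by a truncation surrogate $\nu'$ that transports the $\nu$-mass on $\{f\le -L\}$ to $x_0$. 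Your estimates check out: the coupling has the stated marginals and cost, the choice of $\lambda_n$ keeps $\mu_n$ in the ball, the bound $[1+t]/[M+t]\ge 1/\max\{1,M\}$ makes the spike term diverge, and the surrogate is sound because $W_p^p(\nu',\nu)\le\int_{\{f\le -L\}}d^p(y,x_0)\,\mathrm{d}\nu(y)\to 0$ by dominated convergence while $\int_X f\,\mathrm{d}\nu'$ is real (using that $\int_X[f]_+\,\mathrm{d}\nu<\infty$ in that case). What your route buys is independence from the discrete-approximation theorem; what the paper's route buys is that integrability of $f$ under the (finitely supported) centre is automatic, so no case analysis on $\int_X f\,\mathrm{d}\nu$ is needed.

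Two repairs are needed. First, in the ``growth implies finite'' direction you only bound the value above by $c(1+C)$; finiteness also requires ruling out the value $-\infty$, i.e.\ exhibiting one feasible $\mu$ with $\int_X f\,\mathrm{d}\mu>-\infty$, and this is not automatic since $\int_X f\,\mathrm{d}\nu$ itself may equal $-\infty$. The paper settles this with $\hat{\nu}$; your own surrogate $\nu'$ does the job equally well (it is feasible because $W_p(\nu',\nu)\le r/2<r$ and its objective value is a real number), so this is an omission of a step rather than a defect of method, but it must be stated to prove the ``if'' direction of the equivalence. Second, the parenthetical claim that $\{f\le -L\}$ is closed is false: upper semi-continuity makes the superlevel sets $\{f\ge a\}$ closed, not the sublevel sets. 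Fortunately closedness is never used --- Borel measurability of $\{f\le -L\}$ suffices to define $\nu'$ and the associated transport plan --- so you should simply delete that parenthetical.
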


\begin{proof}
	Recall that the Wasserstein radius $r$ is strictly positive. By~\cite[Theorem 6.18]{villani2008optimal}, there thus exists a discrete probability measure $\hat{\nu} = \sum_{i = 1}^N \alpha_i \cdot \delta_{y_i}$, $\alpha \in \mathbb{R}^N_+$ with $\sum_{i = 1}^N \alpha_i = 1$, that is supported on $2 \leq N < \infty$ atoms $y_1,\dots, y_N \in X$ and that satisfies $W_p (\hat{\nu}, \nu) \le \radius / 2$, that is, $\hat{\nu}$ resides in the vicinity of $\nu$. We fix this measure for the remainder of the proof.
	
	Note also that $f$ is real-valued and the integral of $f$ under $\hat{\nu}$ reduces to a finite sum of real values. Since $\hat{\nu}$ is feasible in~\eqref{opt:dist}, the optimal value of~\eqref{opt:dist} thus cannot be $- \infty$. We next prove that the boundedness of~\eqref{opt:dist} implies the stated growth condition on $f$.
	
	Assume for the sake of contradiction that problem~\eqref{opt:dist} is bounded, but for all $x_0 \in X$ and $c > 0$ there is $x \in X$ for which $f (x) > c[1+ d^p (x,x_0)]$, in violation of the statement of the theorem. We can then construct a sequence $\{ y^k \}_k \subseteq X$ such that
	\begin{equation*}
	\lim_{k \ra \infty} \frac{f(y^k)}{1+d^p(y^k, y_1)} = +\infty.
	\end{equation*}
	Note that in this expression, we have chosen $x_0 = y_1$, where $y_1$ is the first atom of $\hat{\nu}$. In the following, we use the sequence $\{ y^k \}_k$ to construct a sequence of probability measures $\{ \mu^k \}_k$ from within the Wasserstein ball $\mathcal{B}_\radius (\nu)$ for which the integrals $\int_X f(x)\, \mathrm{d}\mu^k (x)$ diverge, in contradiction to the assumption that problem~\eqref{opt:dist} is bounded.
	
	Using the sequence $\{ y^k \}_k$, we construct the sequence of probability measures
	\begin{equation*}
	\mu^k = \epsilon^k \cdot \delta_{y^k} + (\alpha_1 - \epsilon^k) \cdot \delta_{ y_1} + \sum_{i =2}^N \alpha_i \cdot \delta_{ y_i}
	\quad \text{with} \quad
	\epsilon^k = \frac{\min\{\alpha_1, \radius^p/ 2^p\}}{1+d^p (y^k, y_1)}.
	\end{equation*}
	Note that each $\mu^k$ is indeed a probability measure since $\epsilon^k \in [0,\alpha_1]$ and $\epsilon^k + (\alpha_1 - \epsilon^k) + \alpha_2 + \ldots + \alpha_N = \alpha_1 + \ldots + \alpha_N = 1$. To see that $\mu^k \in \mathcal{B}_\radius (\nu)$ for all $k$, consider the transportation plan $\gamma^k \in \mathcal{P} (X\times X)$ given by
	\begin{equation*}
	\gamma^k = \epsilon^k \cdot \delta_{(y_1, y^k)} + (\alpha_1 - \epsilon^k) \cdot \delta_{( y_1,  y_1)} + \sum_{i = 2}^N \alpha_i \cdot \delta_{( y_i,  y_i)}.
	\end{equation*}
	One readily verifies that $\gamma^k \in \Gamma (\hat{\nu}, \mu^k)$, as well as
	\begin{align*}
	W_p (\nu, \mu^k)
	\;\; &\leq \;\;
	W_p (\nu, \hat{\nu}) + W (\mu^k, \hat{\nu})
	\;\; \leq \;\;
	\frac{\radius}{2} +
	\left[ \int_{X\times X} d^p (x_1,x_2) \,\mathrm{d}\gamma^k (x_1,x_2) \right]^{\frac{1}{p}} \\
	&= \;\;
	\frac{\radius}{2} +
	\left[ \epsilon^k \cdot d^p ( y_1, y^k)  + (\alpha_1 - \epsilon^k) \cdot d^p( y_1,  y_1) + \sum_{i = 2}^N \alpha_i \cdot d^p ( y_i,  y_i) \right]^{\frac{1}{p}}
	\;\; \leq \;\; r.
	\end{align*}
	Here, the first inequality follows from the triangle inequality of the Wasserstein distance on $\mathcal{P} (X)$~\cite[Corollary 5.7]{clement2008wasserstein}, the second inequality holds by  construction of $\hat{\nu}$ and because $\gamma^k \in \Gamma (\hat{\nu}, \mu^k)$, and the last inequality follows from the fact that $\epsilon^k \cdot d^p ( y_1, y^k) \leq r^p / 2^p$. We thus conclude that $\{ \mu^k \}_k \subseteq \mathcal{B}_\radius (\nu)$ as desired. To see that the integrals $\int_X f(x)\, \mathrm{d}\mu^k (x)$ diverge, we note that
	\begin{equation*}
	\mspace{-5mu}
	\lim_{k\ra \infty} \int_X f(x) \, \mathrm{d}\mu^k (x)
	\;\; = \;\;
	\lim_{k\ra \infty} \left[ \frac{\min\{\alpha_1, \radius^p/2^p\}}{1+d^p (y^k, y_1)} \cdot f(y^k) + (\alpha_1 - \epsilon^k) \cdot f( y_1) + \sum_{i=2}^N \alpha_i \cdot f( y_i) \right].
	\end{equation*}
	The second expression diverges since $f(y^k) / [1+d^p (y^k, y_1)] \longrightarrow \infty$ while $\min\{\alpha_1, \radius^p/2^p\}$ is constant in $k$. We have thus proved that the boundedness of problem~\eqref{opt:dist} implies the stated growth condition on $f$.
	
	To show that the stated growth condition on $f$ also implies the boundedness of problem~\eqref{opt:dist}, fix any $\mu \in \mathcal{B}_{\radius} (\nu)$ and note that
	\begin{align*}
	\int_X f(x) \, \mathrm{d}\mu (x)
	\;\; &\leq \;\; \int_X c \left[ 1+ d^p (x,x_0) \right] \, \mathrm{d}\mu (x)\\
	&= \;\; c + c \cdot W_p^p (\mu , \delta_{x_0}) \\
	&\leq \;\; c + c \left[ W_p (\mu , \nu) + W_p (\nu , \delta_{x_0})\right]^p,
	\end{align*}
	where the inequalities are due to the assumed growth condition on $f$ and the triangle inequality of $W_p$, respectively. Since $W_p (\mu , \nu) \leq r$ as $\mu \in \mathcal{B}_{\radius} (\nu)$, we can thus uniformly bound the objective value of every measure $\mu \in \mathcal{B}_{\radius} (\nu)$ in problem~\eqref{opt:dist}.
\end{proof}

\section{Existence of Optimal Solutions}\label{sec:existence_of_solutions}

We now study the existence of optimal solutions to the optimization problem~\eqref{opt:dist}. 

\begin{thm}\label{thm:optimiser_exists}
	Assume that $\nu$ has a finite $p$-th moment. 
	If there exist $x_0 \in X$, $ c > 0$ and $p' \in (0,p)$ such that $ f(x) \le c [1+ d^{p'} (x,x_0)]$ for all $x\in X$, then the optimal value of~\eqref{opt:dist} is attained.
\end{thm}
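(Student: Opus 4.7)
My plan is to exploit the weak compactness of $\mathcal{B}_\radius(\nu)$ established in Theorem~\ref{thm:compact} by showing that the functional $\mu \mapsto \int_X f\,\mathrm{d}\mu$ is weakly upper semi-continuous on the ball; the existence of an optimizer then follows from a standard compactness argument. As a preliminary step, the elementary bound $d^{p'}(x,x_0) \le 1 + d^p(x,x_0)$ upgrades the hypothesis to $f(x) \le 2c\,[1 + d^p(x,x_0)]$, so Theorem~\ref{thm:finiteness} guarantees that the optimal value $V^\star$ is finite. Taking a maximizing sequence $\{\mu^k\} \subseteq \mathcal{B}_\radius(\nu)$ with $\int f\,\mathrm{d}\mu^k \to V^\star$, Theorem~\ref{thm:compact} (together with the weak closedness of the ball shown in its proof) yields a weakly convergent subsequence, not relabelled, with limit $\mu^\star \in \mathcal{B}_\radius(\nu)$.

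The heart of the argument is to prove $\limsup_k \int f\,\mathrm{d}\mu^k \le \int f\,\mathrm{d}\mu^\star$. I would introduce the continuous auxiliary function $g(x) := c[1 + d^{p'}(x,x_0)]$ and form $F := g - f$. By the growth hypothesis $F \ge 0$, and since $-f$ is lower semi-continuous and $g$ is continuous, $F$ is lower semi-continuous. The standard Portmanteau-type inequality for nonnegative lower semi-continuous integrands then yields
\begin{equation*}
\liminf_{k \to \infty} \int_X F \, \mathrm{d}\mu^k \;\ge\; \int_X F \, \mathrm{d}\mu^\star.
\end{equation*}
Provided that $\int g\,\mathrm{d}\mu^k \to \int g\,\mathrm{d}\mu^\star$ (and this common limit is finite, which it is because $\mu^\star$ inherits a finite $p$-th moment from Lemma~\ref{lem:main}), this rearranges into $\limsup_k \int f\,\mathrm{d}\mu^k \le \int f\,\mathrm{d}\mu^\star$, and combined with the feasibility of $\mu^\star$ gives $\int f\,\mathrm{d}\mu^\star = V^\star$.

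The main obstacle is therefore establishing the weak continuity of $\mu \mapsto \int g\,\mathrm{d}\mu$ along $\{\mu^k\}$, since $g$ is unbounded. I would dispatch this by truncation: for any $M > c$ the bounded continuous function $g \wedge M$ satisfies $\int (g \wedge M)\,\mathrm{d}\mu^k \to \int (g \wedge M)\,\mathrm{d}\mu^\star$ by the classical portmanteau theorem. The overflow $\int (g-M)_+\,\mathrm{d}\mu^k$ is supported on $\{d(\cdot,x_0) > R_M\}$ with $R_M := (M/c - 1)^{1/p'}$, and Markov's inequality together with the uniform $p$-th moment bound from Lemma~\ref{lem:main} controls this tail by an expression of the form $K_1 R_M^{-p} + K_2 R_M^{-(p-p')}$, which vanishes uniformly in $k$ as $M \to \infty$. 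Here the strict inequality $p' < p$ is essential; without it, the tails of $d^{p'}$ would fail to be uniformly integrable over the Wasserstein ball and weak continuity of $\mu \mapsto \int g\,\mathrm{d}\mu$ could fail.
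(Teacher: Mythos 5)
Your argument is correct, and it reaches the paper's conclusion by a mildly different decomposition built on the same ingredients. The paper also reduces everything to weak upper semi-continuity of $\mu \mapsto \int_X f\,\mathrm{d}\mu$ on $\mathcal{B}_\radius(\nu)$ (then invokes Theorem~\ref{thm:compact} and Weierstrass), but it truncates $f$ itself, setting $f_R = \min\{f, c(R^{p'}+1)\}$, shows $|f - f_R| \le c\, d^{p'}(\cdot,x_0)$ on $\{d(\cdot,x_0)\ge R\}$, and applies the upper-semicontinuous Portmanteau bound (Lemma~\ref{lem:Portmanteau}) to $f_R$. You instead subtract $f$ from the continuous envelope $g = c[1 + d^{p'}(\cdot,x_0)]$, apply the Fatou-type Portmanteau inequality to the nonnegative lower semi-continuous function $g - f$, and then prove weak continuity of $\mu \mapsto \int_X g\,\mathrm{d}\mu$ along the ball by truncating $g$ and controlling the overflow; that lower semi-continuous Portmanteau statement is not in the paper's appendix, but it follows from Lemma~\ref{lem:Portmanteau} applied to $-(g-f)\wedge M$ together with monotone convergence, so nothing essential is missing. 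The quantitative heart is identical in both routes: the tail estimate $d^{p'} = d^p/d^{p-p'} \le d^p/R^{p-p'}$ on $\{d(\cdot,x_0) \ge R\}$ combined with the uniform $p$-th moment bound of Lemma~\ref{lem:main}, which is exactly where $p' < p$ enters (your $K_1 R_M^{-p} + K_2 R_M^{-(p-p')}$ bound matches the paper's $cC/R^{p-p'}$ bound in spirit). What your version buys is a cleaner separation of the two standard facts at play (Portmanteau for the lsc part, uniform integrability of the envelope $g$ over the ball); what the paper's version buys is that it needs only the single usc Portmanteau lemma already stated in its appendix and avoids any discussion of possibly infinite values of $\int f\,\mathrm{d}\mu^\star$ (in your rearrangement this corner case is harmless only because $\int g\,\mathrm{d}\mu^\star$ is finite — worth stating explicitly). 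Your preliminary reduction $d^{p'} \le 1 + d^p$ to invoke Theorem~\ref{thm:finiteness} is also fine.
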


\begin{proof}
By Theorem~\ref{thm:compact} and Weierstrass' theorem, it suffices to show that the map $\mu \mapsto \int_X f (x) \,\mathrm{d}\mu (x) $ is weakly upper semi-continuous on $\mathcal{B}_\radius (\nu)$. To this end, let $\{\mu^k\}_k \subseteq \mathcal{B}_{\radius} (\nu)$ be a sequence  converging weakly to $\mu^\infty$. In order to show that $\limsup_{k\longrightarrow\infty} \int_X f(x) \, {\rm d}\mu^k(x)\leq \int_X f(x) \, {\rm d}\mu^\infty(x)$, we may may assume w.l.o.g.~that $\int_X f(x) \, \mathrm{d}\mu^k (x) > - \infty$ for all $k$. Since the assumptions of Theorem~\ref{thm:finiteness} are satisfied, the optimal value of problem~\eqref{opt:dist} is finite, and hence $\int_X f(x) \, \mathrm{d}\mu^k (x) < \infty$ for all $k$ as well. Since $\mathcal{B}_{\radius} (\nu)$ is weakly closed, we have $\mu^\infty \in \mathcal{B}_{\radius} (\nu)$. We need to show that for any $\epsilon > 0$ there is $k (\epsilon)$ such that
\begin{equation*}
\int_X f(x) \, \mathrm{d}\mu^k (x) - \int_X f(x) \, \mathrm{d}\mu^\infty (x) \le \epsilon
\qquad \forall k \geq k (\epsilon).
\end{equation*}

Our proof relies on the construction of an auxiliary function $f_R : X \rightarrow \mathbb{R}$ whose integrals under the probability measures $\mu \in \mathcal{B}_\radius (\nu)$ are close to those of $f$ but that is at the same time bounded from above by a constant involving $R$. To this end, fix $R > 0$ and set $f_R (x) = \min\{ f(x), \, c (R^{p'} + 1) \}$. We then have
\begin{equation}\label{eq:f_R}
	\left| f(x) - f_R (x) \right| \leq
	\begin{cases}
		c \cdot d^{p'}(x, x_0) & \text{if } d(x, x_0) \geq R, \\
		0 & \text{otherwise}
	\end{cases}
	\qquad \forall x \in X.
\end{equation}
To see this, note first that $f_R (x) \leq f (x)$ and thus $\left| f(x) - f_R (x) \right| = f(x) - f_R (x)$. Fix $x \in X$ and assume first that $d(x,x_0) \geq R$. If $f (x) \leq c(R^{p'} +1)$, then $f_R (x) = f (x)$ and thus $\left| f(x) - f_R (x) \right| = 0$. If $f (x) > c(R^{p'} +1)$, on the other hand, then 
\begin{equation*}
	f (x) - f_R (x)
 	\;\; = \;\;
 	f(x) - c(R^{p'}+1)
 	\;\; \leq \;\;
 	c(d^{p'}(x,x_0) +1) - c(R^{p'}+1)
 	\;\; \leq \;\;
 	c \cdot d^{p'}(x,x_0),
\end{equation*}
where first inequality follows from the assumption made in the statement of this theorem. Assume now that $d (x, x_0) < R$. In that case, the same assumption implies that $f(x) \leq c (d^{p'} (x,x_0) + 1) < c(R^{p'} +1)$ and hence $f(x) = f_R (x)$. We thus conclude that the bound~\eqref{eq:f_R} indeed holds. The bound implies that for all $\mu \in \mathcal{B}_\radius (\nu)$, we have that
\begin{align}
	\left| \int_{X} f(x) \, \mathrm{d}\mu (x) - \int_X f_R (x) \, \mathrm{d}\mu (x) \right|
	\;\; &\leq \;\;
	\int_{X} \left| f(x) - f_R (x) \right| \, \mathrm{d}\mu (x) \nonumber \\
	&\leq \;\;
	c \cdot \int_{ d(\,\cdot\,,x_0) \ge R }  d^{p'} (x,x_0)\,\mathrm{d}\mu (x)
	\;\; \leq \;\;
	\frac{c\cdot C}{R^{(p-p')}}, \label{eq:bound_for_f_R}
\end{align}
where the first and second inequality follow from the triangle inequality and equation~\eqref{eq:f_R}, respectively. The third inequality holds because
\begin{equation*}
	d^{p’} (x, x_0 )
	\;\; = \;\;
	\frac{d^p ( x, x_0 )}{d^{p – p’} (x, x_0)}
	\;\; \leq \;\;
	\frac{d^p (x, x_0)}{R^{(p – p’)}}
	\qquad \forall x \in X \, : \, d(x, x_0) \geq R,
\end{equation*}
and since Lemma~\ref{lem:main} implies that there is $C > 0$ such that
\begin{equation*}
	\int_{ d(\,\cdot\,,x_0) \ge R }  d^p (x,x_0)\,\mathrm{d}\mu (x)
	\;\; \leq \;\;
	\int_X d^p (x, x_0)\, \mathrm{d}\mu (x)
	\;\; \leq \;\;
	C
	\qquad\forall \mu \in \mathcal{B}_{\radius} (\nu).
\end{equation*}

Using the auxiliary function $f_R$, we can now prove the weak upper semicontinuity of the map $\mu \mapsto \int_X f (x) \,\mathrm{d}\mu (x)$ over $\mathcal{B}_\radius (\nu)$. Indeed, for every $\epsilon > 0$ there is $k (\epsilon)$ such that for all $k \geq k (\epsilon)$, we have
\begin{align*}
	\int_X f(x) \, \mathrm{d}\mu^k (x) - \int_X f(x) \, \mathrm{d}\mu^\infty (x)
	\;\; &\leq \;\;
	\int_X f_{R} (x) \, \mathrm{d}\mu^k (x) - \int_X f_{R} (x) \, \mathrm{d}\mu^\infty (x) \\
	& \quad
	+ \left| \int_X f(x) \, \mathrm{d}\mu^k (x) - \int_X f_{R} (x) \, \mathrm{d}\mu^k (x) \right| \\
	& \quad
	+ \left| \int_X f(x)\, \mathrm{d}\mu^\infty (x) - \int_X f_{R} (x) \, \mathrm{d}\mu^\infty (x) \right|
	\;\; \leq \;\;
	\epsilon.
\end{align*}
Here, the first inequality follows from the triangle inequality. As for the second inequality, we can choose $R$ sufficiently large such that $c \cdot C / [R^{(p-p')}] \leq \epsilon / 3$, which implies by equation~\eqref{eq:bound_for_f_R} that the two absolute values are both bounded above by $\epsilon / 3$. Note further that $f_R$ is upper semicontinuous and bounded from above by construction. Lemma~\ref{lem:Portmanteau} in Appendix~\ref{sec:m-theory} thus implies that the first difference of integrals can be made smaller than $\epsilon / 3$ by selecting $k (\epsilon)$ sufficiently large. We therefore conclude that $\mu \mapsto \int_X f (x) \,\mathrm{d}\mu (x)$ is weakly upper semicontinuous over $\mathcal{B}_\radius (\nu)$, as desired.
 \end{proof}

Note that the only difference between the conditions of Theorems~\ref{thm:finiteness} and~\ref{thm:optimiser_exists} is that in the latter case, we require the growth condition for $f$ to be satisfied for some $p' < p$. One can readily construct instances of problem~\eqref{opt:dist} where $f (x)$ grows asymptotically as $d_p^p (x, x_0)$ for some $x_0 \in X$ but where the optimal value is still attained. On the other hand, \cite[Example~4]{the_tutorial} presents an instance of~\eqref{opt:dist} that satisfies the condition of Theorem~\ref{thm:finiteness} and for which there exists no single measure that attains the (finite) optimal value. In this example, $X$ is necessarily unbounded. Indeed, if $X$ is bounded, then the Wasserstein distance $W_p$ metrizes the weak topology (see, \eg, \cite[page~330]{Dudley}, \cite[page~11]{Gibbs_Su} and \cite[Theorem 6.9]{villani2008optimal}) and therefore an optimal solution to problem~\eqref{opt:dist} exists even under the weaker condition of Theorem~\ref{thm:finiteness}.


A necessary and sufficient condition for the existence of optimal solutions to~\eqref{opt:dist} is derived in \cite{gao2016distributionally}. Since that condition requires the knowledge of an optimal dual solution to problem~\eqref{opt:dist}, however, it is difficult to verify in practice. In contrast, our result relies on a sufficient condition that is easily verifiable.

\section{Existence of Optimal Discrete Solutions}\label{sec:discrete}

We now show that we can restrict problem~\eqref{opt:dist} to probability measures supported on $N+1$ atoms if the reference measure $\nu$ of the Wasserstein ball $\mathcal{B}_\radius (\nu)$ is a discrete probability measure supported on $N$ atoms. This is frequently the case in applications, where the reference measure $\nu$ is chosen as the empirical measure on finitely many training samples.


\begin{thm}[Existence of Discrete Optimal Solutions]\label{thm:finite_supp}
Assume that $\nu$ is supported on $N$ atoms and that the conditions of Theorem~\ref{thm:optimiser_exists} hold. Then problem~\eqref{opt:dist} is optimized 
by a probability measure that is supported on at most $N + 1$ atoms.
\end{thm}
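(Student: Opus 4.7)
The plan is to reformulate problem~\eqref{opt:dist} as an infinite-dimensional linear program in $N$ non-negative Borel measures on $X$ and then invoke the sparsity machinery of Appendix~\ref{sec:sparsity} to extract an optimal measure supported on at most $N+1$ atoms. Write $\nu = \sum_{i=1}^N \alpha_i \delta_{y_i}$ with atoms $y_1,\dots,y_N \in X$ and weights $\alpha_i > 0$ summing to one. Since $\nu$ is purely atomic, every coupling $\gamma \in \Gamma(\nu,\mu)$ disintegrates as $\gamma = \sum_{i=1}^N \alpha_i \cdot (\delta_{y_i} \otimes \mu_i)$ for conditional probability measures $\mu_i \in \mathcal{P}(X)$, and conversely every such choice of conditional measures produces a coupling whose second marginal is $\mu = \sum_{i=1}^N \alpha_i \mu_i$ with transport cost $\sum_{i=1}^N \alpha_i \int_X d^p(y_i,x)\,\mathrm{d}\mu_i(x)$. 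Consequently, $\mu \in \mathcal{B}_\radius(\nu)$ if and only if there exist such measures $\mu_i$ whose combined cost is at most $\radius^p$.

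Substituting the non-negative measures $\tilde{\mu}_i := \alpha_i \mu_i$, I would recast problem~\eqref{opt:dist} equivalently as
\begin{equation*}
\begin{array}{l@{\quad}l}
\displaystyle \mathop{\text{maximize}}_{\tilde{\mu}_1,\dots,\tilde{\mu}_N \ge 0} & \displaystyle \sum_{i=1}^N \int_X f(x)\,\mathrm{d}\tilde{\mu}_i(x) \\[3mm]
\text{subject to} & \displaystyle \sum_{i=1}^N \int_X d^p(y_i,x)\,\mathrm{d}\tilde{\mu}_i(x) \le \radius^p, \\[3mm]
 & \displaystyle \tilde{\mu}_i(X) = \alpha_i, \quad i=1,\dots,N.
\end{array}
\end{equation*}
This is a linear program over non-negative Borel measures on $X$ with exactly $N+1$ scalar linear constraints (the budget inequality plus $N$ mass-balance equalities). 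Its optimal value equals that of~\eqref{opt:dist} by the correspondence above, and the existence of an optimal tuple $(\tilde{\mu}_1^\star,\dots,\tilde{\mu}_N^\star)$ follows from Theorem~\ref{thm:optimiser_exists} together with the existence of an optimal transport plan between $\nu$ and any optimal $\mu^\star$ of~\eqref{opt:dist}.

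The final step is to invoke the sparsity result from Appendix~\ref{sec:sparsity}, which asserts that such an LP admits an extreme-point optimal solution whose combined atoms across $\tilde{\mu}_1^\star,\dots,\tilde{\mu}_N^\star$ number at most the number of scalar constraints, namely $N+1$. Setting $\mu^\star := \sum_{i=1}^N \tilde{\mu}_i^\star$ then yields a feasible probability measure for~\eqref{opt:dist} that attains the optimal value, whose support $\bigcup_{i=1}^N \mathrm{supp}(\tilde{\mu}_i^\star)$ consists of at most $N+1$ points. The main obstacle I anticipate is bookkeeping: ensuring that the precise statement of the sparsity lemma in Appendix~\ref{sec:sparsity} applies verbatim to this tuple-of-measures formulation and that, after accounting for the inequality constraint via a slack reformulation, the atom count indeed lands at $N+1$. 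The equivalence between the two formulations, as well as the disintegration step, are routine because $\nu$ has finite support and the conditional measures can simply be read off from the coupling's joint distribution.
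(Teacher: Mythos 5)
Your reformulation is essentially the paper's: disintegrating the coupling against the discrete $\nu$ gives exactly the lifted problem~\eqref{opt:lift_1} (your tuple $(\tilde{\mu}_1,\dots,\tilde{\mu}_N)$ is just the measure $\gamma\in\mathcal{M}_+(X\times Y)$ written fibre by fibre), so the strategy is sound up to that point. However, the two places where you defer to ``bookkeeping'' are precisely where the real work lies, and as written both steps have genuine gaps. First, Proposition~\ref{prop:moment} applies to programs with \emph{equality} constraints only, and its hypothesis~\eqref{eq:ext_cond} is verified in the appendix only under the condition $\mathcal{F}\subseteq\mathcal{P}(Z)$. Your budget constraint is an inequality, and the slack reformulation you propose destroys exactly this condition: whether you append a scalar slack or model the slack as mass on an extra isolated point of $Z$, the feasible measures acquire total mass $1+s$ with $s$ variable, so $\mathcal{F}\not\subseteq\mathcal{P}(Z)$ and~\eqref{eq:ext_cond} is no longer automatic. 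This is not a cosmetic issue; it is the reason the paper splits into two cases. When the budget constraint can be tightened to an equality without changing the optimal value, the problem has $N+1$ equality constraints over probability measures and Proposition~\ref{prop:moment} applies verbatim; when it cannot, the paper abandons the LP machinery altogether and shows directly that any optimizer then satisfies $W_p(\mu^\star,\nu)<\radius$ strictly and that $\mathcal{B}_\radius(\nu)$ must contain a Dirac measure at a global maximizer of $f$, which is a one-atom optimal solution. Your proof has no counterpart to this second case.

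Second, even where Proposition~\ref{prop:moment} does apply, it only asserts equality of \emph{suprema}: the supremum over all feasible measures equals the supremum over feasible measures in $\mathcal{D}_{N+1}(X\times Y)$. It does not assert that the latter supremum is attained, and your appeal to Theorem~\ref{thm:optimiser_exists} plus an optimal transport plan only produces an optimizer of the unrestricted LP, not one with at most $N+1$ atoms. Attainment within the discrete class is a separate analytic fact; the paper proves it in Lemma~\ref{lem:optimal_value_attained} (weak compactness of the $\mathcal{D}_{N+1}$-restricted feasible region plus weak upper semicontinuity of the objective, the latter relying on the growth exponent $p'<p$), and offers a second, more elementary proof in the appendix. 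Without an argument of this kind, your chain of equal suprema cannot be converted into the existence of an $(N+1)$-atom optimal $\mu^\star$, so the proposal is incomplete at both of the theorem's genuinely delicate points.
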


\begin{proof}
Our proof proceeds in several steps. By replacing the Wasserstein distance with its definition, we first transform the optimization problem~\eqref{opt:dist} into an infinite-dimensional linear program over the cone of nonnegative measures that acommodates $N$ equality constraints and $1$ inequality constraint. We next assume that the inequality constraint can be strengthened to an equality without affecting the optimal value of the problem. In this case, the problem becomes an infinite-dimensional linear program in standard form, and the desired conclusion follows from the sparsity of optimal basic feasible solutions to such problems (\emph{cf.}~Appendix~\ref{sec:sparsity}). If the inequality constraint cannot be strengthened to an equality without affecting the optimal value of the problem, finally, we show that the optimization problem admits an optimal solution that is a Dirac measure.

In view of the first step, note that the definition of the Wasserstein distance~\eqref{def:W_P} and the Wasserstein ball~\eqref{def:the_ball} imply that the optimization problem~\eqref{opt:dist} can be formulated as
\begin{equation*}
	\begin{array}{l@{\quad}l}
		\displaystyle \mathop{\text{maximize}}_{\mu} & \displaystyle \int_X f(x) \, \mathrm{d}\mu (x) \\[4mm]
		\displaystyle \text{subject to} & \mu \in \mathcal{P}(X) \\[1mm]
		& \displaystyle \inf_{\gamma \in \Gamma(\mu, \nu)} \int_{X \times X} d^p (x_1, x_2) \,\mathrm{d} \gamma (x_1, x_2) \le r^p .
	\end{array}
\end{equation*}
Following \cite{MEK18:watergate}, we can eliminate the embedded minimization over $\gamma$ in the problem above and obtain the equivalent single-level problem
\begin{equation}\label{opt:lift_0}
	\begin{array}{l@{\quad}l}
		\displaystyle \mathop{\text{maximize}}_{\mu, \, \gamma}  & \displaystyle \int_X f(x) \, \mathrm{d}\mu (x) \\[4mm]
		\mbox{subject to} & \mu\in \mathcal{P}(X), \;\; \gamma \in \mathcal{P} (X \times X) \\
		& \displaystyle P_1 \gamma = \mu, \;\; P_2 \gamma = \nu \\[1mm]
		& \displaystyle \int_{X \times X} d^p (x_1, x_2) \,\mathrm{d} \gamma (x_1, x_2) \le r^p,
	\end{array}
\end{equation}
where $P_1 \gamma$ and $P_2 \gamma$ refer to the first and second marginal probability measure of $\gamma$, respectively, that is, $(P_1 \gamma) (S) = \gamma (S \times X)$ and $(P_2 \gamma) (S) = \gamma (X \times S)$ for any Borel subset $S$ of $X$. Since $\mu = P_1 \gamma$, a change of variables 
allows us to rewrite the objective function as
\begin{equation*}
	\int_{X \times X} f(x_1) \, \mathrm{d}\gamma (x_1, x_2).
\end{equation*}

We can assume that $\nu$ satisfies $\nu = \sum_{i = 1}^N \alpha_i \cdot \delta_{y_i}$ with $\alpha \in \mathbb{R}^N_+$ and $\sum_{i=1}^N \alpha_i = 1$ as well as $y_1, \ldots, y_N \in X$. We can then re-express the constraint $P_2 \gamma = \nu$ in~\eqref{opt:lift_0} through
\begin{equation*}
	\int_{X\times X} \mathds{1}_{ X \times \{ y_i\} } (x_1, x_2) \,  \mathrm{d}\gamma (x_1, x_2) = \alpha_i
	\qquad \forall i = 1,\dots, N,
\end{equation*}
where the indicator function satisfies $\mathds{1}_S (x_1, x_2) = 1$ if $(x_1, x_2) \in S \subseteq X \times X$ and $\mathds{1}_S (x_1, x_2) = 0$ otherwise. If we additionally define $Y = \{ y_1, \dots,  y_N \}$ and make the normalization of $\gamma$ explicit, we obtain the following equivalent reformulation of problem~\eqref{opt:lift_0}:
\begin{equation}\label{opt:lift_0.5}
	\begin{array}{l@{\quad}l@{\qquad}l}
		\displaystyle \mathop{\text{maximize}}_{\gamma}  & \displaystyle \int_{X \times Y} f(x_1) \, \mathrm{d}\gamma (x_1, x_2) \\[4mm]
		\mbox{subject to} & \gamma \in \mathcal{M}_+ (X \times Y) \\[1mm]
		& \displaystyle \int_{X \times Y} \mathrm{d}\gamma (x_1, x_2) = 1 \\[4mm]
		& \displaystyle \int_{X \times Y} \mathds{1}_{ X \times \{ y_i \} } (x_1, x_2) \,  \mathrm{d}\gamma (x_1, x_2) = \alpha_i & \displaystyle \forall i = 1,\dots, N \\[4mm]
		& \displaystyle \int_{X \times Y} d^p (x_1, x_2) \,\mathrm{d} \gamma (x_1, x_2) \le r^p,
	\end{array}
\end{equation}
Here, $\mathcal{M}_+ (X \times Y)$ is the set of all non-negative finite Borel measures supported on $X \times Y$, and the first integral constraint ensures that $\gamma$ is indeed a probability measure residing in $\mathcal{P} (X \times Y)$. Note that any one of the $N + 1$ equality constraints in problem~\eqref{opt:lift_0.5} is implied by the remaining $N$ constraints. Hence, we can drop the normalization constraint. This seemingly redundant step ensures that we can get a discrete optimal solution with $N+1$ support points later. 
Combining the aforementioned reductions, problem~\eqref{opt:lift_0.5} becomes
\begin{equation}\label{opt:lift_1}
	\begin{array}{l@{\quad}l@{\qquad}l}
		\displaystyle \mathop{\text{maximize}}_{\gamma}  & \displaystyle \int_{X \times Y} f(x_1) \, \mathrm{d}\gamma (x_1, x_2) \\[4mm]
		\mbox{subject to} & \gamma \in \mathcal{M}_+ (X \times Y) \\[1mm]
		& \displaystyle \int_{X \times Y} \mathds{1}_{ X \times \{ y_i \} } (x_1, x_2) \,  \mathrm{d}\gamma (x_1, x_2) = \alpha_i & \displaystyle \forall i = 1,\dots, N \\[4mm]
		& \displaystyle \int_{X \times Y} d^p (x_1, x_2) \,\mathrm{d} \gamma (x_1, x_2) \le r^p.
	\end{array}
\end{equation}
Below, we distinguish between the two cases where the optimal value of problem~\eqref{opt:lift_1} changes (or remains unchanged) if we strengthen the last integral constraint to an equality.



We next assume that we can strengthen the last integral constraint to an equality without affecting the optimal value of the problem. In that case, problem~\eqref{opt:lift_1} resembles a linear program in standard form with $N + 1$ equality constraints and infinitely many nonnegative decision variables. We should thus expect that its optimal value is attained by a basic feasible solution, that is, a solution for which at most $N + 1$ variables are strictly positive and all others vanish. In our context, such a basic feasible solution would correspond to a discrete measure $\gamma$ that is supported on at most $N + 1$ atoms of $X \times Y$. To formalize this intuition, we apply Proposition~\ref{prop:moment} from Appendix~\ref{sec:sparsity} to conclude that our variant of problem~\eqref{opt:lift_1} is equivalent to
\begin{equation}\label{opt:lift_1.9}
	\begin{array}{l@{\quad}l@{\qquad}l}
		\displaystyle \mathop{\text{maximize}}_{\gamma}  & \displaystyle \int_{X \times Y} f(x_1) \, \mathrm{d}\gamma (x_1, x_2) \\[4mm]
		\mbox{subject to} & \gamma \in \mathcal{D}_{N+1} (X \times Y) \\[1mm]
		& \displaystyle \int_{X \times Y} \mathds{1}_{ X \times \{ y_i \} } (x_1, x_2) \,  \mathrm{d}\gamma (x_1, x_2) = \alpha_i & \displaystyle \forall i = 1,\dots, N \\[4mm]
		& \displaystyle \int_{X \times Y} d^p (x_1, x_2) \,\mathrm{d} \gamma (x_1, x_2) = r^p,
	\end{array}
\end{equation}
where we have replaced the set of all non-negative finite Borel measures $\mathcal{M}_+ (X \times Y)$ on $X \times Y$ with the subset of non-negative discrete measures $\mathcal{D}_{N+1} (X \times Y)$ that are supported on at most $N + 1$ points in $X \times Y$. Indeed, one readily verifies that our variant of problem~\eqref{opt:lift_1} is an instance of problem~\eqref{opt:infinite_LP} if we set $Z = X\times  Y$, $\psi = f \circ \pi_1$, where $\pi_1 (x_1, x_2) = x_1$ for $(x_1, x_2) \in X \times X$, $m = N+1$, $\phi_i = \mathds{1}_{X\times \{ y_i \}}$ and $v_i = \alpha_i$, $i = 1,\dots,N$, as well as $\phi_{N+1} = d^p$ and $v_{N+1} = r^p$. Moreover, the conditions of Proposition~\ref{prop:moment} are satisfied. Indeed, we assumed in Section~\ref{sec:intro} that at least one of the integrals $\int_X [f (x)]_+ \, \mathrm{d} \mu (x)$ and $\int_X [-f (x)]_+ \, \mathrm{d} \mu (x)$ is finite for each $\mu\in \mathcal{B}_\radius (\nu)$. Likewise, the requirement that $\int_Z |\phi_i| (z) \, \mathrm{d}\gamma (z) <\infty$ for all $i = 1,\dots,m$ is guaranteed by Lemma~\ref{lem:main} and the fact that $\gamma$ is a probability measure.


It is not a priori clear whether the feasible region of problem~\eqref{opt:lift_1.9} is weakly compact, and thus we cannot ascertain that the optimal value of~\eqref{opt:lift_1.9} is attained. If we replace the last equality in~\eqref{opt:lift_1.9} with an inequality, however, we obtain the relaxation
\begin{equation}\label{opt:lift_2}
\begin{array}{l@{\quad}l@{\qquad}l}
\displaystyle \mathop{\text{maximize}}_{\gamma}  & \displaystyle \int_{X \times Y} f(x_1) \, \mathrm{d}\gamma (x_1, x_2) \\[4mm]
\mbox{subject to} & \gamma \in \mathcal{D}_{N+1} (X \times Y) \\[1mm]
& \displaystyle \int_{X \times Y} \mathds{1}_{ X \times \{ y_i \} } (x_1, x_2) \,  \mathrm{d}\gamma (x_1, x_2) = \alpha_i & \displaystyle \forall i = 1,\dots, N \\[4mm]
& \displaystyle \int_{X \times Y} d^p (x_1, x_2) \,\mathrm{d} \gamma (x_1, x_2) \leq r^p
\end{array}
\end{equation}
whose optimal value is attained by Lemma~\ref{lem:optimal_value_attained} below. Note that the optimal value of~\eqref{opt:lift_2} is sandwiched by the optimal values of~\eqref{opt:lift_1}  and~\eqref{opt:lift_1.9}. Since the optimal values of~\eqref{opt:lift_1}  and~\eqref{opt:lift_1.9} coincide, we conclude that problem~\eqref{opt:lift_2} must have the same optimal value as well.

Let $\gamma^\star$ be an optimal solution to problem~\eqref{opt:lift_2}. Since $\gamma^\star \in \mathcal{D}_{N+1} (X \times Y)$, there are $(x_1^i, x_2^i) \in X \times Y$, $i = 1, \ldots, N+1$, as well as $\beta \in \mathbb{R}^{N+1}_+$ such that $\gamma^\star = \sum_{i=1}^{N+1} \beta_i \cdot \delta_{(x_1^i, x_2^i)}$ and $\sum_{i=1}^{N+1} \beta_i = 1$. By construction, $\mu^\star = P_1 \gamma^\star$ satisfies $\mu^\star \in \mathcal{D}^{N+1} (X)$, $\int_X f (x) \, \mathrm{d}\mu^\star (x) = \sup \eqref{opt:lift_2} = \sup \eqref{opt:dist}$, as well as
\begin{align*}
	W_p (\mu^\star, \nu)
	\;\; &= \;\;
	\inf_{\gamma \in \Gamma(\mu^\star, \nu)} \left(  \int_{X\times X} d^p (x_1,x_2)\,\mathrm{d}\gamma(x_1, x_2) \right)^{\frac{1}{p}} \\
	&\leq \;\;
	\left(  \int_{X\times X} d^p (x_1,x_2)\,\mathrm{d}\gamma^\star(x_1, x_2) \right)^{\frac{1}{p}}
	\;\; \leq \;\;
	r,
\end{align*}
which implies that $\mu^\star \in \mathcal{B}_r (\nu)$. We thus conclude that $\mu^\star$ is an $N+1$-point distribution in $\mathcal{B}_r (\nu)$ that optimizes problem~\eqref{opt:dist}, as desired.

Assume now that strengthening the last integral constraint of problem~\eqref{opt:lift_1} to an equality changes the optimal value of~\eqref{opt:lift_1}. We claim that in this case, any optimal solution $\mu^\star$ to problem~\eqref{opt:dist} satisfies the strict inequality $W_p (\mu^\star, \nu) < \radius$. Indeed, assume to the contrary that there is an optimal solution $\mu^\star$ to~\eqref{opt:dist} that satisfies $W_p (\mu^\star, \nu ) = r$. By \cite[Theorem 4.1]{villani2008optimal}, the distance $W_p (\mu^\star, \nu)$ is attained by some minimizer $\gamma^\star\in \mathcal{P} (X\times X)$ of~\eqref{def:W_P}, and one readily verifies that $\gamma^\star$ would constitute a feasible solution to problem~\eqref{opt:lift_1} that satisfies the last integral constraint as equality and that attains the optimal value of~\eqref{opt:lift_1}. This, however, contradicts our assumption that the optimal value of~\eqref{opt:lift_1} changes if we strengthen the last integral constraint to an equality.

We now claim that the Wasserstein ball $\mathcal{B}_r (\nu)$ must contain a Dirac measure that places all probability mass on a global maximizer of $f$. Assume to the contrary that $\mathcal{B}_r (\nu)$ does not contain such a Dirac measure. In that case, we must have $\arg \max_{x \in X} f(x) \neq \emptyset$, and any optimal solution $\mu^\star$ to~\eqref{opt:dist} must be supported on $\arg \max_{x \in X} f(x)$. Indeed, if that was not the case, there would be $\hat{x} \in X$ with $f (\hat{x}) > \int_X f(x) \, \mathrm{d}\mu^\star$. Consider now all convex combinations $\mu (\lambda) = \lambda \cdot \delta_{\hat{x}} + (1 - \lambda) \cdot \mu^\star$, $\lambda \in [0, 1]$. Since the map $\lambda \mapsto W_p^p (\mu (\lambda), \nu)$ is finite, convex and lower semi-continuous on $\lambda \in [0, 1]$, see \cite[Corollary 5.3]{clement2008wasserstein}, it is continuous on the entire interval. For sufficiently small $\lambda$, $\mu (\lambda)$ is therefore feasible in~\eqref{opt:dist} and attains a larger objective value than $\mu^\star$, thus violating the optimality of $\mu^\star$. Consider now any Dirac distribution $\delta_{x'}$ supported on $x' \in \arg \max_{x \in X} f(x)$. This Dirac distribution must be contained in $\mathcal{B}_r (\nu)$, for otherwise we could again form convex combinations $\mu'$ between $\mu^\star$ and $\delta_{x'}$ that are optimal in~\eqref{opt:dist} and that satisfy $W_p (\mu', \nu) = r$, in contradiction to our earlier finding. We thus conclude that the Wasserstein ball $\mathcal{B}_r (\nu)$ contains a Dirac measure that places all probability mass on a global maximizer of $f$, and this Dirac measure clearly constitutes an optimal discrete solution to~\eqref{opt:dist}.
\end{proof}

The following technical lemma is used in the proof of Theorem~\ref{thm:finite_supp}.

\begin{lem}\label{lem:optimal_value_attained}
	Assume that the assumptions of Theorem~\ref{thm:finite_supp} hold. If the optimal value of problem~\eqref{opt:lift_2} is finite, then it is attained.
\end{lem}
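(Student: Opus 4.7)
The plan is to take an arbitrary maximizing sequence $\{\gamma^k\}_k$ for problem~\eqref{opt:lift_2} and show that a weak subsequential limit remains feasible in~\eqref{opt:lift_2} and attains the supremum. I first establish tightness of $\{\gamma^k\}_k$ in $\mathcal{P}(X \times Y)$. The marginal constraints in~\eqref{opt:lift_2} together with the finiteness of $Y$ imply that the second marginal $P_2 \gamma^k$ equals $\nu$ for every $k$, which is trivially tight. For the first marginal, $\gamma^k$ is a coupling of $P_1 \gamma^k$ and $\nu$, and the Wasserstein budget constraint yields $W_p(P_1\gamma^k, \nu) \le r$, so $P_1 \gamma^k \in \mathcal{B}_r(\nu)$. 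By Theorem~\ref{thm:compact}, $\mathcal{B}_r(\nu)$ is weakly compact and hence tight. Tightness of both marginals then delivers tightness of the joint distributions $\{\gamma^k\}_k$.

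Next I extract a weakly convergent subsequence whose limit $\gamma^\infty$ lies again in $\mathcal{D}_{N+1}(X\times Y)$. Writing $\gamma^k = \sum_{j=1}^{N+1} \beta_j^k \delta_{z_j^k}$ with $\beta_j^k \ge 0$ and $z_j^k \in X \times Y$ (allowing repetitions and zero weights), I pass to a subsequence along which $\beta_j^k \to \beta_j^\infty$ for each $j$. For every index $j$ with $\beta_j^\infty > 0$, the weights $\beta_j^k$ are bounded away from zero eventually, so tightness of $\{\gamma^k\}_k$ forces $z_j^k$ to remain in a compact subset of $X \times Y$ from some $k$ on; a further diagonal extraction then yields $z_j^k \to z_j^\infty$. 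Testing against bounded continuous functions shows that $\gamma^k$ converges weakly to $\gamma^\infty = \sum_{j : \beta_j^\infty > 0} \beta_j^\infty \delta_{z_j^\infty} \in \mathcal{D}_{N+1}(X \times Y)$.

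Finally I verify feasibility and optimality of $\gamma^\infty$. The marginal equalities are preserved because each indicator $\mathds{1}_{X \times \{y_i\}}$ is bounded and continuous on $X \times Y$ (the $y_i$ are isolated points of the finite set $Y$), and the Wasserstein budget is preserved since $d^p$ is nonnegative and lower semi-continuous, so the Portmanteau theorem gives $\int d^p\,\mathrm{d}\gamma^\infty \le \liminf_k \int d^p\,\mathrm{d}\gamma^k \le r^p$. For the objective, weak continuity of the projection $P_1$ combined with the upper semi-continuity argument already carried out in the proof of Theorem~\ref{thm:optimiser_exists} (applied to the sequence $\{P_1\gamma^k\}_k \subseteq \mathcal{B}_r(\nu)$) yields $\limsup_k \int f\,\mathrm{d}\gamma^k = \limsup_k \int f \,\mathrm{d}P_1\gamma^k \le \int f\,\mathrm{d}P_1\gamma^\infty = \int f\,\mathrm{d}\gamma^\infty$, so $\gamma^\infty$ attains the supremum. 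The main subtlety lies in keeping the limit in $\mathcal{D}_{N+1}$: tightness rules out mass escaping to infinity on atoms of non-vanishing weight, so the number of atoms can only decrease in the limit, never grow.
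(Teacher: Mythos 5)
Your proof is correct, and it sits between the paper's two proofs of this lemma rather than coinciding with either. Like the paper's main proof, you obtain tightness of the feasible measures from the tightness of $\mathcal{B}_\radius(\nu)$ (via $P_1\gamma^k\in\mathcal{B}_\radius(\nu)$ and $P_2\gamma^k=\nu$) and you control the objective by recycling the weak upper semi-continuity argument from the proof of Theorem~\ref{thm:optimiser_exists}; but where the paper argues at the level of sets---writing the feasible region of~\eqref{opt:lift_2} as $\mathcal{S}_1\cap\mathcal{S}_2\cap\mathcal{S}_3$, claiming each is weakly closed, and then invoking Prokhorov and Weierstrass---you work directly on a maximizing sequence and track its atoms. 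This buys you something: the delicate point of the paper's set-based argument is the weak closedness of $\mathcal{S}_3=\mathcal{D}_{N+1}(X\times Y)\cap\mathcal{P}(X\times Y)$, justified there as the image of a continuous map on a non-compact domain, whereas your observation that tightness pins every atom of non-vanishing limit weight inside a fixed compact set establishes directly that a subsequential weak limit is again an at most $(N+1)$-point measure. Compared with the paper's alternative proof in the appendix, which parameterizes $\gamma^k$ through the marginal constraints and handles a possibly escaping ``split'' atom by an explicit estimate using the growth condition with $p'<p$, you simply discard atoms whose weights vanish (they contribute nothing against bounded continuous test functions) and delegate the control of their contribution to the objective to the $f_R$-truncation argument of Theorem~\ref{thm:optimiser_exists}; this shortens the case analysis at the price of leaning on that earlier proof. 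The remaining steps all check out: the marginal equalities pass to the limit because the indicator of $X\times\{y_i\}$ is bounded and continuous on $X\times Y$ (with $Y$ finite), the budget constraint passes to the limit by lower semi-continuity of $d^p$, and the resulting feasibility gives $P_1\gamma^\infty\in\mathcal{B}_\radius(\nu)$, which is exactly what is needed to apply the semi-continuity result at the limit measure.
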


\begin{proof}
We first show that the feasible region of problem~\eqref{opt:lift_2} is weakly closed. Towards that end, note that the feasible region can be written as the intersection $ \mathcal{S}_1\cap \mathcal{S}_2\cap \mathcal{S}_3$ with
\begin{align*}
\mathcal{S}_1 & =\left\lbrace \gamma\in \mathcal{P}(X\times  Y): P_2 \gamma = \nu \right\rbrace = P_2^{-1} \left\lbrace\nu \right\rbrace, \\
\mathcal{S}_2 & = \left\lbrace \gamma\in \mathcal{P}(X\times  Y): \int_{X\times X} d^p (x,y) \, \mathrm{d}\gamma (x,y) \le \radius^p \right\rbrace \text{ and } \\
\mathcal{S}_3 & = \mathcal{D}_{N+1} (X\times  Y)\cap \mathcal{P}(X\times  Y),
\end{align*}
where $P_2$ sends a probability measure on $X\times Y$ to its second marginal distribution. We claim that these three sets are all weakly closed. Indeed, the weak closedness of $\mathcal{S}_1 = P_2^{-1} \left\lbrace\nu \right\rbrace$ follows from the facts that $P_2$ is continuous on $\mathcal{P}(X\times  Y)$ with respect to the weak topology by \cite[Theorem 15.14]{aliprantis2006infinite} and that any singleton in $\mathcal{P}(X\times  Y)$ is weakly closed. The weak closedness of $\mathcal{S}_2$ follows from the fact that it is a lower level set of the weakly lower semi-continuous map $\gamma \mapsto \int_{X\times X} d^p(x,y)\, \mathrm{d}\gamma(x,y)$, see~\cite[Lemma 5.3]{owhadi2017extreme}. As for the set $\mathcal{S}_3$, let $\Delta_{N+1}\subseteq \mathbb{R}^{N+1}$ be the probability simplex and $P:\Delta_{N+1}\times (X\times Y)^{N+1} \to \mathcal{S}_3$ be the map defined by
\begin{equation*}
P(\beta,z_1,\dots,z_{N+1}) = \sum_{i = 1}^{N+1} \beta_i \delta_{z_i},
\end{equation*}
which is weakly continuous. Noting that $\mathcal{S}_3  = P(\Delta_{N+1}\times (X\times  Y)^{N+1})$, we have $\mathcal{S}_3$ is also weakly closed. Hence, the feasible region of problem~\eqref{opt:lift_2} is also weakly closed.

Next, we claim that the feasible region is tight. Indeed, given any $\epsilon > 0$, by the tightness of the Wasserstein ball (see the proof of Theorem~1), there exists a compact set $K\subseteq X$ such that $\mu (X\setminus K) \le \epsilon$ for all $\mu \in \mathcal{B}_\radius(\nu)$. Also, for any $\gamma \in \mathcal{S}_1\cap \mathcal{S}_2$, we have that $P_1 \gamma \in \mathcal{B}_\radius (\nu)$. Therefore,
\begin{equation*}
\gamma \left( (X\times  Y)\setminus (K\times  Y) \right) = \gamma \left( (X \setminus K)\times  Y \right) = (P_1 \gamma) (X \setminus K) \le \epsilon,
\end{equation*}
which proves the claim. 
By the Prokhorov's Theorem (\emph{cf.}~Theorem~\ref{thm:Prokhorov} from Appendix~\ref{sec:m-theory} and the subsequent remark) and the two claims just proved, the feasible region is  weakly compact.

Finally, from the proof of Theorem~\ref{thm:optimiser_exists}, it can be easily proven that the objective function of problem~\eqref{opt:lift_2} \[\gamma\mapsto \int_{X \times Y} f(x_1) \, \mathrm{d}\gamma (x_1, x_2)\] is weakly upper semi-continuous on the feasible region. We thus conclude that an optimal solution exists. This completes the proof.
\end{proof}


One may wonder whether the result of Theorem~\ref{thm:finite_supp} can be strengthened further to the existence of optimal solutions to~\eqref{opt:dist} that are supported on fewer than $N + 1$ atoms. While this is possible for specific instances (for example, if $f (x)$ is concave), one can construct instances of problem~\eqref{opt:dist} where the optimal value is only attained by measures supported on at least $N + 1$ atoms \cite[Example~5]{the_tutorial}.

The sparsity of optimal solutions to problem~\eqref{opt:dist} has been investigated by several authors. To our best knowledge, the first result in this direction is \cite{wozabal2012framework}, which employs the Kantorovich-Rubinstein and the Richter-Rogosinski theorems to prove that if~\eqref{opt:dist} is solvable, then it is solved by a measure that is supported on at most $N + 3$ atoms. Subsequently, \cite{owhadi2017extreme} showed that, if problem~\eqref{opt:dist} is solvable, there are indeed optimal solutions that are only supported on at most $N + 2$ atoms. The sharp characterization of optimal measures supported on at most $N + 1$ atoms has been first derived in \cite{gao2016distributionally}. In contrast to our result, the authors do not employ the Richter-Rogosinski theorem or results remiscent of those in Appendix~\ref{sec:sparsity}. Instead, they rely on the first-order optimality conditions of the problem dual to~\eqref{opt:dist}. While this allows them to provide further insights into the structure of optimal solutions, their proof is substantially more difficult to verify than ours. Finally, we remark that a special case of this $(N+1)$-atom result has also been proved via yet another argument in~\cite{nguyen2019optimistic}.

\paragraph{Acknowledgements.}

The authors gratefully acknowledge funding from the Swiss National Science Foundation under Grant BSCGI0$\underline{~}$157733, the UK's Engineering and Physical Sciences Research Council under Grant EP/R045518/1 and the Hong Kong Research Grants Council under the Grant 25302420.

\newpage

\bibliographystyle{abbrv}
\bibliography{references}

\newpage

\begin{appendices}
	
\section{Auxiliary Measure-Theoretic Results}\label{sec:m-theory}

We review some well-known facts from measure theory that we use to prove our results. We first recall a connection between the notions of tightness and weak sequential compactness of collections of probability measures.

\begin{defi}\label{def:tight}
	A collection $\mathcal{S} \subseteq \mathcal{P} (X)$ of probability measures is tight if for any $\epsilon >0$, there exists a compact subset $B \subseteq X$ such that $\mu (X \setminus B) \le \epsilon$ for all $\mu\in \mathcal{S}$.
\end{defi}

\begin{defi}\label{def:weak_convergence}
	A sequence $\{\mu^k\}_k \subseteq \mathcal{P} (X)$ of probability measures converges weakly to $\mu^\infty \in \mathcal{P} (X)$ if for any bounded and continuous function $g$ on $X$, we have
	\begin{equation*}
		\lim_{k\longrightarrow \infty } \int_X g(x) \,\mathrm{d}\mu^k
		\;\; = \;\;
		\int_X g(x) \,\mathrm{d}\mu^\infty.
	\end{equation*}
\end{defi}

\begin{defi}\label{def:weak_compactness}
	A collection $\mathcal{S} \subseteq \mathcal{P} (X)$ of probability measures is weakly sequentially compact if every sequence in $\mathcal{S}$ \mbox{has a subsequence that converges weakly to an element of $\mathcal{S}$.}
\end{defi}

The concepts of tightness and weak sequential compactness are connected by Prokho-rov's Theorem, see for example \cite[Theorem 5.1]{billingsley2013convergence}.
	\begin{thm}[Prokhorov's Theorem]\label{thm:Prokhorov}
		A collection $\mathcal{S} \subseteq \mathcal{P} (X)$ of probability measures is tight if and only if the closure of $\mathcal{S}$ is weakly sequentially compact in $\mathcal{P}(X)$.
	\end{thm}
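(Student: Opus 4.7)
The plan is to prove the two directions of Prokhorov's theorem separately; both arguments lean essentially on the Polish structure of $X$.

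For the necessity (closure of $\mathcal{S}$ weakly sequentially compact $\Rightarrow$ $\mathcal{S}$ tight), I would fix $\delta>0$, take a countable dense sequence $\{x_i\}\subseteq X$, and form the increasing open covers $U_{k,n} := \bigcup_{i\le n} B(x_i,1/k) \uparrow X$ for each resolution $k\in\mathbb{N}$. The key claim, proved by contradiction, is that for each $k$ there is an index $n_k$ with $\mu(U_{k,n_k}^c) < \delta/2^k$ uniformly over $\mu\in\mathcal{S}$. If not, one extracts $\{\mu_n\}\subseteq\mathcal{S}$ with $\mu_n(U_{k,n}^c)\ge\delta/2^k$, passes by assumption to a weakly convergent subsequence with limit $\mu^\infty\in \mathcal{P}(X)$, and applies the Portmanteau theorem to the closed set $U_{k,m}^c$ (with $m$ fixed and $n\to\infty$): the monotonicity $U_{k,n}\supseteq U_{k,m}$ for $n\ge m$ forces $\mu^\infty(U_{k,m}^c)\ge\delta/2^k$ for every $m$, contradicting the continuity of measure $\mu^\infty(U_{k,m}^c)\downarrow 0$ as $m\to\infty$. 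With the $n_k$ in hand, the set $K := \bigcap_k \overline{U_{k,n_k}}$ is closed and totally bounded (covered by $n_k$ balls of radius $1/k$ for every $k$), hence compact by completeness of $X$; and $\mu(K^c)\le \sum_k \mu(U_{k,n_k}^c) < \delta$ for every $\mu\in\mathcal{S}$.

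For the sufficiency ($\mathcal{S}$ tight $\Rightarrow$ closure weakly sequentially compact), given any sequence $\{\mu_n\}\subseteq\mathcal{S}$, tightness yields compact sets $K_1\subseteq K_2\subseteq\cdots$ with $\mu_n(K_j^c) < 1/j$ for all $n$. On each compact metric space $K_j$, the Banach space $C(K_j)$ is separable, so by Banach--Alaoglu the set of sub-probability measures on $K_j$ is weak-$*$ sequentially compact. A Cantor diagonal extraction then produces a subsequence $\{\mu_{n_k}\}$ whose restrictions to every $K_j$ converge weakly to a finite Borel measure $\lambda_j$ on $K_j$. The $\lambda_j$ are consistent under the inclusions $K_j\hookrightarrow K_{j+1}$ and assemble into a single Borel measure $\mu^\infty$ on $X$. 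The identity $\mu^\infty(X) = 1$ follows from Portmanteau applied to the closed set $K_j$: $\lambda_j(K_j)\ge \limsup_k \mu_{n_k}(K_j)\ge 1-1/j$, and letting $j\to\infty$ closes the estimate. Weak convergence $\mu_{n_k}\to\mu^\infty$ on all of $X$ then follows by testing against arbitrary $g\in C_b(X)$, splitting $\int g\,\mathrm{d}\mu_n$ into integrals over $K_j$ and $K_j^c$, controlling the first term by the weak convergence on $K_j$ (after a Tietze extension to preserve the sup-norm bound) and the second by $\|g\|_\infty/j$.

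The main obstacle will be the sufficiency direction, specifically the careful concatenation of the weak limits $\{\lambda_j\}$---which live on distinct compact subspaces---into a single Borel probability on $X$, together with the passage from weak convergence on each $K_j$ to weak convergence on $X$ against arbitrary $g\in C_b(X)$. Once the tightness defect $1/j$ is tracked through the Portmanteau inequalities and the Tietze extensions are invoked on a uniform sup-norm bound, both the total-mass identity $\mu^\infty(X) = 1$ and the global weak convergence fall into place.
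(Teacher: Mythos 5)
The paper does not actually prove this statement: Prokhorov's Theorem is quoted as a known result with a pointer to \cite[Theorem 5.1]{billingsley2013convergence}, so there is no in-paper argument to compare against. Your proposal is, in outline, the classical textbook proof, and the ``necessity'' half (weak sequential compactness of the closure implies tightness) is correct as written: the contradiction argument with the closed sets $U_{k,m}^c$ via the Portmanteau inequality, and the compact set $K=\bigcap_k \overline{U_{k,n_k}}$ (closed and totally bounded, hence compact by completeness) are exactly the standard steps. Two small administrative points: the theorem speaks of the \emph{closure} of $\mathcal{S}$, so you should add that a tight family has a tight weak closure (apply the open-set Portmanteau inequality to $X\setminus K$) and that closure equals sequential closure because $\mathcal{P}(X)$ is metrizable for Polish $X$; these are one-line remarks.

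The genuine soft spot is the sentence ``the $\lambda_j$ are consistent under the inclusions $K_j\hookrightarrow K_{j+1}$.'' Taken literally ($\lambda_{j+1}$ restricted to $K_j$ equals $\lambda_j$) this is false: on $X=\RR$ with $K_1=\{0\}$, $K_2=[0,1]$ and $\mu_n=\delta_{1/n}$, the restrictions to $K_1$ are identically zero, so $\lambda_1=0$, while $\lambda_2=\delta_0$, whose restriction to $K_1$ is not $\lambda_1$. What is true, and what the construction needs, is only the monotonicity $\lambda_{j+1}(B)\ge \lambda_j(B)$ for every Borel $B\subseteq K_j$: for nonnegative $g\in C(K_{j+1})$ one has $\int_{K_{j+1}} g\,\mathrm{d}\lambda_{j+1}=\lim_k\int_{K_{j+1}} g\,\mathrm{d}\mu_{n_k}\ge \limsup_k\int_{K_j} g\,\mathrm{d}\mu_{n_k}=\int_{K_j} g\,\mathrm{d}\lambda_j$, which gives the inequality first for closed $B$ and then for all Borel $B$ by inner regularity. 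One then defines $\mu^\infty(A)=\lim_j \lambda_j(A\cap K_j)$, an increasing limit of finite measures with masses bounded by $1$, hence a Borel measure; the mass bound $\lambda_j(K_j)\ge 1-1/j$ gives $\mu^\infty(X)=1$, and your splitting argument over $K_j$ and $K_j^c$ (no Tietze extension is needed, restricting $g$ to $K_j$ suffices, and $\mu^\infty-\lambda_j(\cdot\cap K_j)$ is a nonnegative measure of mass at most $1/j$) yields weak convergence of the subsequence. With the consistency claim replaced by this monotone-limit construction, the proof is complete and coincides with the standard functional-analytic route (Riesz representation plus Banach--Alaoglu and a diagonal extraction) rather than Billingsley's combinatorial construction via open sets; both are legitimate, and nothing in the paper depends on which proof of Prokhorov's Theorem one adopts.
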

Note that the space $\mathcal{P}(X)$ is metrizable, sequential compactness and compactness of subsets of $\mathcal{P}(X)$ are equivalent to each other.	
	
	The following lemma, which is excerpted from the Portmanteau Theorem (see for example \cite[Problem 29.1(c)]{billingsley1995probability}), provides a useful characterization of weak convergence.
	\begin{lem}\label{lem:Portmanteau}
		A sequence $\{\mu^k \}_k \subseteq \mathcal{P} (X)$ of probability measures converges weakly to $\mu^\infty \in \mathcal{P} (X)$ if and only if for any upper bounded and upper semi-continuous function $g$ on $X$, we have
		\begin{equation*}
		\limsup_{k\longrightarrow \infty} \int_X g(x) \,\mathrm{d}\mu^k (x)
		\;\; \le \;\;
		\int_X g(x) \,\mathrm{d}\mu^\infty (x).
		\end{equation*}
	\end{lem}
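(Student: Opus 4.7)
The plan is to prove the two implications separately, with the nontrivial direction being the forward one. For the converse direction $(\Leftarrow)$, I would simply observe that any bounded continuous $g$ is both upper semi-continuous and bounded above (so the hypothesis applies to $g$) and the same is true of $-g$. Applying the hypothesis to $g$ yields $\limsup_k \int g\, \mathrm{d}\mu^k \le \int g\, \mathrm{d}\mu^\infty$, and applying it to $-g$ and negating yields $\liminf_k \int g\, \mathrm{d}\mu^k \ge \int g\, \mathrm{d}\mu^\infty$. Together these give convergence of $\int g\, \mathrm{d}\mu^k$ to $\int g\, \mathrm{d}\mu^\infty$, which is exactly weak convergence per Definition~\ref{def:weak_convergence}.

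For the forward direction $(\Rightarrow)$, my plan is a standard two-stage approximation. First I would reduce to the case of functions that are \emph{also} bounded from below. Given $g$ upper semi-continuous with $g \le M$, define the truncations $g_N(x) = \max\{g(x), -N\}$ for each positive integer $N$. Each $g_N$ is upper semi-continuous (as the maximum of an u.s.c.\ function and a constant), bounded in $[-N,M]$, and $g_N \downarrow g$ pointwise as $N\to\infty$. Since $g \le g_N$, once the bounded case is established we obtain
\begin{equation*}
\limsup_{k\ra\infty} \int_X g(x)\, \mathrm{d}\mu^k(x) \;\le\; \limsup_{k\ra\infty} \int_X g_N(x)\, \mathrm{d}\mu^k(x) \;\le\; \int_X g_N(x)\, \mathrm{d}\mu^\infty(x),
\end{equation*}
and monotone convergence (applicable because $g_N^+ \le M^+$ is uniformly bounded, so $\int g_N\, \mathrm{d}\mu^\infty \downarrow \int g\, \mathrm{d}\mu^\infty$ in $[-\infty,\infty)$) delivers the desired inequality for the original $g$.

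The core step is then the bounded case, where $m \le g \le M$ with $g$ upper semi-continuous. Here I would employ the classical Lipschitz approximation from above: set
\begin{equation*}
g_n(x) \;=\; \sup_{y \in X}\bigl\{ g(y) - n \cdot d(x,y) \bigr\}, \qquad n \in \mathbb{N}.
\end{equation*}
A routine verification shows that $g_n$ is $n$-Lipschitz (hence continuous), that $m \le g \le g_n \le M$, and that $\{g_n\}$ is non-increasing with $g_n(x) \downarrow g(x)$ pointwise — the last property being the place where upper semi-continuity of $g$ is used, since it ensures that for any sequence $y_n$ with $d(x,y_n) \to 0$ we have $\limsup g(y_n) \le g(x)$. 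Because each $g_n$ is bounded and continuous, weak convergence gives $\int g_n\, \mathrm{d}\mu^k \to \int g_n\, \mathrm{d}\mu^\infty$ as $k\to\infty$, and from $g \le g_n$ we deduce $\limsup_k \int g\, \mathrm{d}\mu^k \le \int g_n\, \mathrm{d}\mu^\infty$ for every $n$. Passing to the limit $n\to\infty$ by bounded/dominated convergence (using $|g_n|\le \max\{|m|,|M|\}$) concludes the bounded case.

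The main obstacle is the correct handling of the reduction to bounded $g$: one must make sure that the outer limit in $N$ is justified even when $\int g\, \mathrm{d}\mu^\infty = -\infty$, which is why the truncation-from-below combined with monotone convergence is essential rather than attempting a direct Lipschitz approximation of an unbounded-below $g$. The pointwise convergence $g_n \downarrow g$ in the Lipschitz construction, while standard, is the only place where the upper semi-continuity hypothesis is genuinely invoked and so must be checked carefully.
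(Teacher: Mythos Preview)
Your argument is correct. Note, however, that the paper does not actually prove this lemma: it is stated as an excerpt from the Portmanteau Theorem with a citation to Billingsley (Problem~29.1(c)) and no further justification. Your proposal therefore supplies a complete, self-contained proof where the paper offers only a reference. The two-stage scheme you outline --- truncation from below to reduce to the bounded case, followed by the Moreau--Yosida/Lipschitz sup-convolution $g_n(x)=\sup_y\{g(y)-n\,d(x,y)\}$ to approximate from above by bounded continuous functions --- is the standard metric-space proof and works exactly as you describe; the monotone-convergence step handling the possibility $\int g\,\mathrm{d}\mu^\infty=-\infty$ is also correctly justified.
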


	
	
\newpage

\section{Basic Feasible Solutions in Infinite-Dimensional Linear Programming}\label{sec:sparsity}

It is well-known that if a finite-dimensional linear program with $m$ equality constraints has an optimal solution, then there must be an optimal basic feasible solution with at most $m$ non-zero entries. An infinite-dimensional analogue of this fact is proved in \cite[Corollary 5 and Proposition 6(v)]{pinelis2016extreme}. To state this result, let $Z$ be a topological space, let $\mathcal{M}_+ (Z)$ be the set of non-negative finite Borel measures supported on $Z$, and let $\psi, \phi_1,\dots,\phi_m : Z \rightarrow \mathbb{R}$ be Borel functions as well as $v \in \mathbb{R}^m$. Consider now the optimization problem
\begin{equation}\label{opt:infinite_LP}
	\begin{array}{l@{\quad}l@{\quad}l}
		\displaystyle \mathop{\text{maximize}}_{\gamma} & \displaystyle \int_Z \psi (z) \, \mathrm{d}\gamma(z) \\[4mm]
		\displaystyle \text{subject to} & \gamma \in \mathcal{M}_+ (Z) \\[1mm]
		& \displaystyle \int_Z \phi_i (z) \, \mathrm{d}\gamma (z)= v_i & \displaystyle \forall i =1,\dots,m,
	\end{array}
\end{equation}
and denote by $\mathcal{F}$ the feasible region of~\eqref{opt:infinite_LP} and by $\mathrm{ext}(\mathcal{F})$ the set of extreme points of $\mathcal{F}$.

\begin{prop}\label{prop:moment}
	Suppose that for all $\gamma \in \mathcal{F}$, 	at least one of the integrals $\int_X [\psi (z)]_+ \, \mathrm{d} \gamma (z)$ and $\int_X [-\psi (z)]_+ \, \mathrm{d} \gamma (z)$ is finite and that $\int_Z |\phi_i| (z) \, \mathrm{d}\gamma (z) <\infty$ for all $i = 1,\dots,m$. If 
	\begin{equation}\label{eq:ext_cond}
		\sup\left\lbrace \int_Z \psi (z) \, \mathrm{d}\gamma (z): \gamma\in \mathcal{F} \right\rbrace \;\; = \;\;
		\sup\left\lbrace \int_Z \psi (z) \, \mathrm{d}\gamma (z): \gamma\in \mathrm{ext}(\mathcal{F}) \right\rbrace,
	\end{equation}
	then it holds that
	\begin{equation*}
		\sup\left\lbrace \int_Z \psi (z) \, \mathrm{d}\gamma(z) : \gamma\in \mathcal{F} \right\rbrace \;\; = \;\;
		\sup\left\lbrace \int_Z \psi (z) \, \mathrm{d}\gamma: \gamma\in \mathcal{F}\cap \mathcal{D}_m (Z)  \right\rbrace,
	\end{equation*}
	where $\mathcal{D}_m (Z)$ is the set of non-negative discrete measures supported on at most $m$ points in $Z$. Furthermore, if $\mathcal{F}\subseteq \mathcal{P}(Z)$ and $Z$ is Hausdorff, then the condition~\eqref{eq:ext_cond} is satisfied.
\end{prop}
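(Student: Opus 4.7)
The plan is to reduce the proposition to a structural characterization of the extreme points of the feasible region: every $\gamma \in \mathrm{ext}(\mathcal{F})$ is a discrete measure supported on at most $m$ points, i.e., $\mathrm{ext}(\mathcal{F}) \subseteq \mathcal{D}_m(Z)$. Granted this fact, the first claim follows from the chain
$$
\sup_{\gamma \in \mathcal{F}} \int_Z \psi\, \mathrm{d}\gamma
\;=\; \sup_{\gamma \in \mathrm{ext}(\mathcal{F})} \int_Z \psi\, \mathrm{d}\gamma
\;\le\; \sup_{\gamma \in \mathcal{F}\cap \mathcal{D}_m(Z)} \int_Z \psi\, \mathrm{d}\gamma
\;\le\; \sup_{\gamma \in \mathcal{F}} \int_Z \psi\, \mathrm{d}\gamma,
$$
where the first equality is the hypothesis~\eqref{eq:ext_cond} and the inclusions use $\mathrm{ext}(\mathcal{F}) \subseteq \mathcal{F} \cap \mathcal{D}_m(Z) \subseteq \mathcal{F}$. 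Thus the bulk of the work is to establish the structural fact and, separately, the automatic validity of~\eqref{eq:ext_cond} under the additional hypothesis.

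To prove $\mathrm{ext}(\mathcal{F}) \subseteq \mathcal{D}_m(Z)$, I would argue by contradiction. Suppose $\gamma \in \mathcal{F}$ is not supported on at most $m$ points. Then one can find $m+1$ pairwise disjoint Borel sets $A_1, \ldots, A_{m+1}$ with $\gamma(A_i) > 0$: if $\gamma$ has at least $m+1$ atoms the $A_i$ can be taken to be singletons, and otherwise the non-atomic part of $\gamma$ is nontrivial and Lyapunov's convexity theorem allows one to partition its support into the requisite number of pieces of positive mass. Writing $\gamma_i := \gamma|_{A_i}$, the homogeneous linear system $\sum_{i=1}^{m+1} t_i \int_Z \phi_j\, \mathrm{d}\gamma_i = 0$ for $j = 1, \ldots, m$ has $m$ equations in $m+1$ unknowns and thus admits a nonzero solution, which I rescale so that $\max_i |t_i| = 1$. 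For all sufficiently small $\delta > 0$, the signed measures $\gamma_\pm := \gamma \pm \delta \sum_i t_i \gamma_i$ are in fact in $\mathcal{M}_+(Z)$, satisfy the equality constraints by construction, and hence belong to $\mathcal{F}$; but $\gamma = \tfrac{1}{2}(\gamma_+ + \gamma_-)$ with $\gamma_+ \neq \gamma_-$, contradicting the extremality of $\gamma$.

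For the furthermore part, assuming $\mathcal{F} \subseteq \mathcal{P}(Z)$ and $Z$ Hausdorff, I would invoke a Choquet-type integral representation to show that every $\gamma \in \mathcal{F}$ is the barycenter of some probability measure supported on $\mathrm{ext}(\mathcal{F})$. Combined with the linearity of $\gamma \mapsto \int_Z \psi\, \mathrm{d}\gamma$ (after unpacking the representation via a Fubini-type identity), this would yield $\int_Z \psi\, \mathrm{d}\gamma \le \sup_{\gamma' \in \mathrm{ext}(\mathcal{F})} \int_Z \psi\, \mathrm{d}\gamma'$ for every $\gamma \in \mathcal{F}$, giving~\eqref{eq:ext_cond}. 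The Hausdorff hypothesis on $Z$ is what makes the ambient space of signed Borel measures on $Z$ a sufficiently regular locally convex space for Choquet theory to apply, while the restriction $\mathcal{F} \subseteq \mathcal{P}(Z)$ provides the total-variation boundedness that replaces the need for a direct compactness argument.

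The main obstacle will be the second part: establishing a Choquet-type representation without any a priori compactness of $\mathcal{F}$ is genuinely delicate and requires nontrivial ingredients from the theory of extremal integral representations, beyond the classical Krein--Milman theorem. A lesser obstacle in the first part is the Lyapunov-splitting step, which must handle the interaction between the atomic and non-atomic components of $\gamma$ with some care, and in particular must verify that the restrictions $\gamma|_{A_i}$ still have all $|\phi_j|$ integrable so that the linear system is well-posed.
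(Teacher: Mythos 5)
The paper itself gives no proof of this proposition---it is quoted from Pinelis (Corollary~5 and Proposition~6(v))---so your attempt must stand as a self-contained reconstruction. Your first half is essentially the standard argument and is sound: splitting the mass of a non-discrete feasible $\gamma$ into $m+1$ disjoint Borel sets of positive measure, solving the $m$ homogeneous equations $\sum_i t_i\int_Z\phi_j\,\mathrm{d}\gamma|_{A_i}=0$ in $m+1$ unknowns, and perturbing by $\pm\delta\sum_i t_i\,\gamma|_{A_i}$ does show such a $\gamma$ is not extreme (the hypothesis $\int_Z|\phi_i|\,\mathrm{d}\gamma<\infty$ makes the system well-posed, as you note, and $|t_i|\le 1$, $\delta\le 1$ keeps $\gamma_\pm$ nonnegative), and the chain $\sup_{\mathcal F}=\sup_{\mathrm{ext}(\mathcal F)}\le\sup_{\mathcal F\cap\mathcal D_m(Z)}\le\sup_{\mathcal F}$ then yields the first claim. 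One caveat: your case split (``at least $m+1$ point atoms, else a nontrivial non-atomic part'') is not exhaustive on an arbitrary topological space, since a finite Borel measure may have a nonzero part carrying no point masses that is still measure-theoretically atomic (e.g.\ the Dieudonn\'e measure on $[0,\omega_1)$, a non-Dirac extreme point of $\mathcal P(Z)$ even with the single constraint $\int\mathds{1}\,\mathrm{d}\gamma=1$); so the inclusion $\mathrm{ext}(\mathcal F)\subseteq\mathcal D_m(Z)$ needs either a nicer $Z$ (Polish, as in the paper's application) or a more careful treatment of atoms.

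The genuine gap is the ``furthermore'' statement, and you have effectively conceded it. Total-variation boundedness is not a substitute for compactness: $\mathcal F\subseteq\mathcal P(Z)$ is in general not compact in any topology to which classical Choquet theory applies (already $\mathcal P(\mathbb R)$ is bounded in total variation but not tight), so ``invoke a Choquet-type integral representation'' is not a proof step---it is precisely the hard theorem. What makes condition~\eqref{eq:ext_cond} automatic for moment-constrained sets of probability measures is a non-compact integral-representation theorem in the spirit of von Weizs\"acker--Winkler (every element of such a moment set is the barycenter of a probability measure carried by the extreme points), or Pinelis's direct argument in the very reference the paper cites; without proving or at least precisely invoking such a result---together with the measurability/Fubini step for a possibly unbounded $\psi$ that you mention only in passing---the second claim remains unestablished. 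In short: the first claim is fine modulo the atom caveat, while the second claim is an acknowledged gap that coincides with the actual substance of the cited result.
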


We note that the conclusion of Proposition~\ref{prop:moment} cannot readily be drawn from the Richter-Rogosinski theorem \cite[Theorem 7.32]{Shapiro2009}. Indeed, in our context the Richter-Rogosinski theorem would only ensure the existence of  a non-negative discrete measure $\gamma^\star$ that is supported on at most $m + 1$ (instead of $m$) points since $\gamma^\star$ would have to satisfy $m + 1$ moment conditions: the $m$ moment constraints of problem~\eqref{opt:infinite_LP} as well as the additional constraint that $\gamma^\star$ attains the optimal objective value of problem~\eqref{opt:infinite_LP}.

\section{An Alternative Proof of Lemma~\ref{lem:optimal_value_attained}}

In this appendix, we provide an alternative proof of Lemma~\ref{lem:optimal_value_attained}, which is longer but arguably more elementary.

\begin{proof}
	Since problem~\eqref{opt:lift_2} has a finite optimal value, there is a sequence of feasible solutions $\{ \gamma^k \}_k$ to~\eqref{opt:lift_2} that attains the optimal value of~\eqref{opt:lift_2} asymptotically. We prove the lemma by constructing from $\{ \gamma^k \}_k$ a solution $\gamma^\star$ that is feasible in~\eqref{opt:lift_2} and that attains the optimal value of~\eqref{opt:lift_2}. To simplify the exposition, we assume in the representation of the reference measure $\nu$ that $\alpha_i > 0$ for all $i$ and that $y_i \neq y_j$ for all $i \neq j$; both conditions can always be satisfied by reducing the number of atoms $N$ if necessary.

	By going over to a subsequence if necessary, we may assume w.l.o.g.~that every measure $\gamma^k$ of the sequence $\{ \gamma^k \}_k$ can be represented as
	\begin{equation*}
		\gamma^k
		\;\; = \;\;
		\sum_{i = 1}^{N - 1} \alpha_i \cdot \delta_{(x_i^k, y_i)}
		\; + \;
		(\alpha_N - \beta_k) \cdot \delta_{(x_N^k, y_N)}
		\; + \;
		\beta_k \cdot \delta_{(x_{N+1}^k, y_N)}
	\end{equation*}
	for some $x_1^k, \ldots, x_{N+1}^k \in X$ and $\beta_k \in [0, \alpha_N]$. Indeed, every $\gamma^k$ is supported on $N + 1$ atoms from $X \times Y$ since $\gamma^k \in \mathcal{D}_{N+1} (X \times Y)$. Moreover, the first integral constraint in~\eqref{opt:lift_2} implies that $\gamma^k (X \times \{ y_i \}) = \alpha_i$ for all $i = 1, \ldots, N$. Since the reference measure $\nu$ has $N$ atoms but $\gamma^k$ is supported on $N + 1$ atoms, there is an atom $y_i$ of the reference measure $\nu$ whose probability mass $\alpha_i$ is split across two atoms in $\gamma^k$ (with one of them possibly having zero probability mass). While a different atom $y_i$ may be split for different $\gamma^k$, we can again go over to a subsequence if necessary to ensure that the same atom $y_i$ is split in all measures of $\{ \gamma^k \}_k$. Moreover, we can assume w.l.o.g.~that the split atom is $y_N$, which gives rise to the two atoms $(x_N^k, y_N)$ and $(x_{N+1}^k, y_N)$ for every $\gamma^k$; this can always be ensured by reordering the atoms of $\nu$ if necessary. In the remainder of this proof we argue that a subsequence of $\{ \gamma^k \}_k$ converges weakly to a measure $\gamma^\star$ that is feasible in~\eqref{opt:lift_2} and that achieves the optimal value of~\eqref{opt:lift_2}.
	
%
	
	We first show that $\{ \gamma^k \}_k$ has a subsequence for which $\{ x_i^k \}_k$ converges to some $x_i^\star \in X$ for every $i = 1, \ldots, N - 1$. Indeed, assume to the contrary that $\{ x_1^k \}_k$ does not have a convergent subsequence. Since the space $(X, d)$ is assumed to be proper, this is only possible if $\{ x_1^k \}_k$ diverges, that is, if $d (x_1^k, y_1) \longrightarrow \infty$. In that case, the associated transportation cost $\alpha_1 \cdot d (x_1^k, y_1)$ would also diverge since $\alpha_1 > 0$. Thus, the last constraint in problem~\eqref{opt:lift_2} would be violated, which contradicts the assumed feasibility of each member of the sequence $\{ \gamma^k \}_k$. Iteratively applying the same argument to $\{ x_i^k \}_k$, $i = 2, \ldots, N - 1$, allows us to replace $\{ \gamma^k \}_k$ \mbox{with a subsequence such that $\{ x_i^k \}_k \longrightarrow x_i^\star$ for all $i = 1, \ldots, N - 1$.}
	
	
	Consider now the sequences $\{ x_N^k \}_k$ and $\{ x_{N+1}^k \}_k$. By going over to a subsequence if necessary, we may assume w.l.o.g.~that $\{ \beta_k \}_k$ converges to $\beta^\star \in [0, \alpha_N]$ since the interval $[0, \alpha_N]$ is compact. If $\beta^\star \in (0, \alpha_N)$, then the argument from the previous paragraph applies equally to $\{ x_N^k \}_k$ and $\{ x_{N+1}^k \}_k$, that is, by iteratively going over to subsequences we may assume that $\{ x_i^k \}_k \longrightarrow x_i^\star$ also for $i = N, N + 1$. In this case, $\{ \gamma^k \}_k$ converges weakly to
	\begin{equation*}
		\gamma^\star
		\;\; = \;\;
		\sum_{i = 1}^{N - 1} \alpha_i \cdot \delta_{(x_i^\star, y_i)}
		\; + \;
		(\alpha_N - \beta^\star) \cdot \delta_{(x_N^\star, y_N)}
		\; + \;
		\beta^\star \cdot \delta_{(x_{N+1}^\star, y_N)}.
	\end{equation*}
	The measure $\gamma^\star$ resides in $\mathcal{D}_{N+1}$, and it satisfies the first integral constraint in~\eqref{opt:lift_2} by construction. It satisfies the second integral constraint since the metric $d$ is continuous in the topology that it generates. Moreover, $\gamma^\star$ attains the optimal value of~\eqref{opt:lift_2} since
	\begin{align}
		\int_{X \times Y} f(x_1) \, \mathrm{d}\gamma^\star (x_1, x_2)
		\;\; &= \;\;
		\sum_{i = 1}^{N - 1} \alpha_i \cdot f (x_i^\star) + (\alpha_N - \beta^\star) \cdot f (x_N^\star) + \beta^\star \cdot f (x_{N+1}^\star) \nonumber \\
		&\geq \;\;
		\mathop{\lim \sup}_{k \longrightarrow \infty} \left[
		\sum_{i = 1}^{N - 1} \alpha_i \cdot f (x_i^k)
		+ (\alpha_N - \beta_k) \cdot f (x_N^k)
		+ \beta_k \cdot f (x_{N+1}^k)
		\right] \nonumber \\
		&= \;\;
		\mathop{\lim \sup}_{k \longrightarrow \infty} \left[
		\int_{X \times Y} f(x_1) \, \mathrm{d}\gamma^k (x_1, x_2)
		\right]
		\;\; = \;\;
		\sup~\eqref{opt:lift_2}. \label{eq:opt}
	\end{align}
	Here, the first and the penultimate identity hold by the definitions of $\gamma^\star$ and $\gamma^k$, respectively, and the last identity follows from the fact that $\{ \gamma^k \}_k$ attains the optimal value of problem~\eqref{opt:lift_2} asymptotically. The inequality in~\eqref{eq:opt} follows from the upper semi-continuity of $f$ and the fact that $x_i^k \longrightarrow x_i^\star$, $i = 1, \ldots, N+1$, as well as $\beta_k \longrightarrow \beta^\star$.	
	
	Assume now that $\beta^\star = 0$; the case where $\beta^\star = \alpha_N$ is symmetric. A similar argument as before implies that by going over to a subsequence if necessary, we may assume w.l.o.g.~that $x_N^k \longrightarrow x_N^\star$. In this case, the sequence $\{ \gamma^k \}_k$ converges weakly to
	\begin{equation*}
		\gamma^\star
		\;\; = \;\;
		\sum_{i = 1}^N \alpha_i \cdot \delta_{(x_i^\star, y_i)},
	\end{equation*}
	even though $\{ x_{N+1}^k \}_k$ may not converge. Again, $\gamma^\star \in \mathcal{D}_{N+1}$, and $\gamma^\star$ satisfies the first integral constraint in problem~\eqref{opt:lift_2}. As for the second integral constraint in~\eqref{opt:lift_2}, we have
	\begin{align*}
		& \int_{X \times Y} d^p (x_1, x_2) \,\mathrm{d} \gamma^\star (x_1, x_2)
		\;\; = \;\;
		\sum_{i = 1}^N \alpha_i \cdot d^p (x_i^\star, y_i) \\
		\leq \;\;
		&\mathop{\lim \sup}_{k \longrightarrow \infty} \left[
		\sum_{i = 1}^{N - 1} \alpha_i \cdot d^p (x_i^k, y_i)
		+ (\alpha_N - \beta_k) \cdot d^p (x_N^k, y_N)
		+ \beta_k \cdot d^p (x_{N+1}^k, y_N)
		\right] \\
		= \;\;
		&\mathop{\lim \sup}_{k \longrightarrow \infty} \left[
		\int_{X \times Y} d^p (x_1, x_2) \,\mathrm{d} \gamma^k (x_1, x_2)
		\right]
		\;\; \leq \;\;
		r^p.
	\end{align*}
	Here, the first identity holds by definition of $\gamma^\star$. The inequality follows from the fact that the sum of $\sum_{i = 1}^{N - 1} \alpha_i \cdot d^p (x_i^k, y_i)$ and $(\alpha_N - \beta_k) \cdot d^p (x_N^k, y_N)$ converges to the expression $\sum_{i = 1}^N \alpha_i \cdot d^p (x_i^\star, y_i)$ on the left-hand side and the last expression $\beta_k \cdot d^p (x_{N+1}^k, y_N)$ is non-negative by construction. The last line, finally, follows from the definition of $\gamma^k$ and the feasibility of $\gamma^k$ in~\eqref{opt:lift_2}. We thus conclude that $\gamma^\star$ is feasible in problem~\eqref{opt:lift_2}.

	To see that $\gamma^\star$ attains the optimal value of problem~\eqref{opt:lift_2}, we observe that
	\begin{align}
		\int_{X \times Y} f(x_1) \, \mathrm{d}\gamma^\star (x_1, x_2)
		\;\; &= \;\;
		\sum_{i = 1}^N \alpha_i \cdot f (x_i^\star) \nonumber \\
		&\geq \;\;
		\mathop{\lim \sup}_{k \longrightarrow \infty} \left[
		\sum_{i = 1}^{N - 1} \alpha_i \cdot f (x_i^k)
		+ (\alpha_N - \beta_k) \cdot f (x_N^k)
		\right] \nonumber \\
		&\geq \;\;
		\mathop{\lim \sup}_{k \longrightarrow \infty} \left[
		\sum_{i = 1}^{N - 1} \alpha_i \cdot f (x_i^k)
		+ (\alpha_N - \beta_k) \cdot f (x_N^k)
		+ \beta_k \cdot f (x_{N+1}^k)
		\right] \nonumber \\
		&= \;\;
		\mathop{\lim \sup}_{k \longrightarrow \infty} \left[
		\int_{X \times Y} f(x_1) \, \mathrm{d}\gamma^k (x_1, x_2)
		\right]
		\;\; = \;\;
		\sup~\eqref{opt:lift_2}. \label{eq:aux_measure_opt}
	\end{align}
	Here, the identities can be justified as in~\eqref{eq:opt}. The first inequality holds since $f$ is upper semi-continuous, $x_i^k \longrightarrow x_i^\star$, $i = 1, \ldots, N$, and $\beta_k \longrightarrow 0$. We claim that
	\begin{equation}\label{eq:limit_vanishes}
		\mathop{\lim \sup}_{k \longrightarrow \infty} \left[ \beta_k \cdot f (x_{N+1}^k) \right] \leq 0,
	\end{equation}
	which proves the second inequality in~\eqref{eq:aux_measure_opt}. To show this, fix an arbitrary $\epsilon > 0$ and choose $R > 0$ large enough such that
	\begin{equation}\label{eq:daniels_magical_r_bound}
		\frac{r^p}{R^p} \cdot c \left[ 1 + \left( R + d (y_N, x_0) \right)^{p'} \right] \leq \epsilon,
	\end{equation}
	which exists since $p' < p$. Note that~\eqref{eq:daniels_magical_r_bound} is monotone in $R$, that is, if it is satisfied for some $R > 0$, it is also satisfied for all $R' > R$. Now select $k (\epsilon) \in \mathbb{N}$ large enough such that $\beta_k \cdot \sup_{x \in X} \{ f(x) \, : \, d (x, y_N) \leq R \} \leq \epsilon$ for all $k \geq k (\epsilon)$. Such a $k (\epsilon)$ exists because $f$ is upper semi-continuous and \mbox{hence its supremum over any compact set is finite. We claim that}
	\begin{equation*}
		\beta_k \cdot f (x_{N+1}^k) \leq \epsilon \qquad \forall k \geq k (\epsilon).
	\end{equation*}
	Fix any $k \geq k (\epsilon)$. If $d (x_{N + 1}^k, y_N) \leq R$, then the claim follows immediately from our choice of $k (\epsilon)$. If $d (x_{N + 1}^k, y_N) > R$, on the other hand, then the claim holds since
	\begin{align*}
		\beta_k \cdot f (x_{N+1}^k)
		\;\; &\leq \;\;
		\frac{r^p}{d^p (x_{N + 1}^k, y_N)} \cdot c \left[ 1 + d^{p'} (x_{N+1}^k, x_0) \right] \\
		&\leq \;\;
		\frac{r^p}{d^p (x_{N + 1}^k, y_N)} \cdot c \left[ 1 + \left( d (x_{N+1}^k, y_N) + d (y_N, x_0) \right)^{p'} \right] \\
		&\leq \;\;
		\frac{r^p}{R^p} \cdot c \left[ 1 + \left( R + d (y_N, x_0) \right)^{p'} \right]
		\;\; \leq \;\; \epsilon.
	\end{align*}
	Indeed, the first inequality holds since $\gamma^k$ satisfies the last integral constraint of problem~\eqref{opt:lift_2} and $f$ satisfies the growth condition of Theorem~\ref{thm:finite_supp}. The second inequality follows from the triangle inequality. The last two inequalities, finally, follow from~\eqref{eq:daniels_magical_r_bound} and the fact that $d (x_{N + 1}^k, y_N) > R$. We thus conclude that~\eqref{eq:limit_vanishes} indeed holds.
\end{proof}

\end{appendices}

\end{document}